\newtheorem{theorem}{Theorem}[section]
\newtheorem{corollary}[theorem]{Corollary}
\newtheorem{lemma}[theorem]{Lemma}
\newtheorem{definition}[theorem]{Definition}
\newtheorem{remark}[theorem]{Remark}
\newtheorem{assumption}{Assumption}
\def\cB{\mathcal{B}}
\def\cF{\mathcal{F}}
\def\cP{\mathcal{P}}
\def\bE{\mathbb{E}}
\def\bN{\mathbb{N}}
\def\bR{\mathbb{R}}
\begin{document}

\title{Stochastic wave equation with heavy-tailed noise: Uniqueness of solutions and past light-cone property}

\author{Juan J. Jim\'enez  \footnote{University of Ottawa, Department of Mathematics and Statistics, 150 Louis Pasteur Private, Ottawa, Ontario, K1G 0P8, Canada. E-mail address: jjime088@uottawa.ca.} }

\date{September 3, 2024}
\maketitle

\begin{abstract}
\noindent In this article, we study the stochastic wave equation in spatial dimensions \(d \le 2\) with multiplicative L\'evy noise that can have infinite \(p\)-th moments. Using the past light-cone property of the wave equation, we prove the existence and uniqueness of a solution, considering only the \(p\)-integrability of the L\'evy measure \(\nu\) for the region corresponding to the small jumps of the noise. For \(d=1\), there are no restrictions on \(\nu\). For \(d=2\), we assume that there exists a value \(p \in (0,2)\) for which \(\int_{ \{|z|\le 1 \} } |z|^p \nu (dz) < + \infty\).
\end{abstract}

\noindent {\em MSC 2020:} Primary 60H15; Secondary 60G60, 60G51

\vspace{1mm}

\noindent {\em Keywords:} stochastic partial differential equations, random fields, space-time L\'evy white noise
\section{Introduction}
Let $(\Omega, \mathcal{F}, ( \mathcal{F}_t )_{t \in \mathbb{R}_+}, \mathbb{P})$ be a stochastic basis with the usual conditions of completeness and right-continuity. We consider the stochastic wave equation in spatial dimensions \(d=1, 2\), given by
\begin{equation}
    \label{wave-levy}
    \begin{cases}
        \dfrac{\partial^2 u }{\partial t^2 } (t,x) = \Delta u (t,x) + \sigma(u(t,x)) \dot{ \Lambda } (t,x), \quad & t>0, \, x \in \mathbb{R}^d, \\
        u(0,x) =u_0(x), \quad \dfrac{\partial u }{\partial t} (0,x)= v_0(x), \quad & x \in \mathbb{R}^d,
    \end{cases}
\end{equation}
where $\sigma$ is a globally Lipschitz function, $u_0$ and $v_0$ are assumed to be non-random measurable functions, and $\Lambda= \{ \Lambda(B); B \in \cB_b( \mathbb{R}_+ \times \mathbb{R}^d) \}$ is a \textit{pure-jump L\'evy space-time white noise} given by
\begin{equation}
    \label{levy-noise1}
    \Lambda(B) = b |B| + \int_{B \times \{ |z| \le 1 \}} z \tilde{J}(dt,dx,dz)
          + \int_{B \times \{ |z| > 1 \}} z J(dt,dx,dz),
\end{equation}
where $\cB_b(\mathbb{R}_+ \times \mathbb{R}^d)$ is the class of Borel sets in $\mathbb{R}_+ \times \mathbb{R}^d$ with finite Lebesgue measure, $b \in \mathbb{R}$, $|B|$ is the Lebesgue measure of $B$ in $\mathbb{R}^{d+1}$, $J$ is a Poisson random measure on $\mathbb{R}_+ \times \mathbb{R}^d \times \mathbb{R}$ with intensity $m(dt,dx,dz) = dt \, dx \, \nu(dz)$, and $\tilde{J}$ is the compensated Poisson random measure of $J$ given by $\tilde{J} = J - m$. Here, $\nu$ is a \textit{L\'evy measure} defined on $\mathbb{R}$, i.e., $\nu$ satisfies
\begin{equation}
    \label{levy-measure0}
    \int_{\mathbb{R}} (|z|^2 \wedge 1) \nu(dz) < +\infty \quad \text{and} \quad \nu(\{0\}) = 0.
\end{equation}
 
 We say that a random field \(\phi = \{\phi(t,x) \, ; \, t \ge 0, \, x \in \mathbb{R}^d \}\) is \textit{predictable} if it is measurable with respect to the \(\sigma\)-field \(\tilde{\mathcal{P}} = \mathcal{P}_0 \times \mathcal{B}(\mathbb{R}^d)\), where \(\mathcal{P}_0\) is the predictable \(\sigma\)-field on \(\Omega \times \mathbb{R}_+\), and \(\mathcal{B}(\mathbb{R}^d)\) is the Borel \(\sigma\)-field on \(\mathbb{R}^d\). We denote by \(\mathcal{P}\) the collection of predictable processes. 

A predictable random field \(u = \{ u(t,x) \, ; \, t \ge 0, x \in \mathbb{R}^d \}\) is considered a \textit{mild solution} of \eqref{wave-levy} if it satisfies the following stochastic integral equation:
\begin{equation}
\label{mild-SPDEs}
u(t,x) = w(t,x) + \int_0^t \int_{\mathbb{R}^d} G_{t-s}(x-y) \sigma(u(s,y)) \Lambda(ds,dy),
\end{equation}
where \(G_t(x)\) is the fundamental solution of the wave operator, defined as:
\begin{equation}
\label{fund-sol}
G_t(x) = \begin{cases}
    \frac{1}{2} \mathds{1}_{ \{ |x| < t \} } \quad &\text{if} \quad d=1, \\
    \frac{1}{2 \pi} \frac{1}{ \sqrt{t^2 - |x|^2}} \mathds{1}_{ \{ |x| < t \} } \quad & \text{if} \quad d=2, \\
\end{cases}
\end{equation}
and \(w\) solves the homogeneous wave equation \(\frac{\partial^2 u}{\partial t^2} - \Delta u = 0\) on \(\mathbb{R}_+ \times \mathbb{R}^d\) with initial conditions matching those of \eqref{wave-levy}:
\begin{equation}
\label{convol-int-cond}
w(t,x) = (G_t \ast v_0 )(x) + \frac{\partial}{\partial t} (G_t \ast u_0)(x).
\end{equation}

We assume the following conditions for \(u_0\) and \(v_0\).
\begin{assumption}
\label{ICH}
\(u_0\) and \(v_0\) are deterministic functions with the following properties.
\begin{itemize}
    \item For \(d=1\), \(u_0\) is locally bounded and continuous, and \(v_0\) is locally bounded and measurable.
    \item For \(d=2\), \(u_0\) is continuously differentiable (\(C^1(\mathbb{R}^2)\)), and \(v_0\) is locally $q_0$-integrable with exponent \(q_0 \in (2, \infty]\), i.e., \(v_0 \in L_{\text{loc}}^{q_0}(\mathbb{R}^2)\).
\end{itemize}
\end{assumption}
Under Assumption \ref{ICH}, we obtain,
\begin{equation}
\label{ICH-eq}    
\sup_{t \in [0,T]} \sup_{|x| \le R} |w(t,x)| < +\infty, \quad \text{ for all } T, R \in \mathbb{R}_+.
\end{equation}
\eqref{ICH-eq} can be proved similarly to Lemma 4.2 in \cite{Dalang-Quer} (see also Theorem 1.2 in \cite{Millet-Sole99}).

\medskip

Regarding the theory of stochastic integration, we use the framework developed in \cite{bit1}, which is based on the concept of the {\em Daniell mean}. The stochastic integral on the right-hand side of \eqref{mild-SPDEs} is defined in Appendix A. This integration theory has been successfully applied to studying SDEs and SPDEs with heavy-tailed noises in \cite{balan49, bit2, chong1, chong2, CDH}.

\paragraph{Notation} Throughout this study, we will use the following notation.
\begin{itemize}
    \item $\mathfrak{B}$ denotes the class of bounded domains in $\mathbb{R}^d$.
    \item For $D \in \mathfrak{B}$, $\overline{D}$ represents the topological closure of $D$ with the usual topology.
    \item $\tilde{\mathcal{P}}_b$ is the collection of all sets $A \in \tilde{\mathcal{P}}$ such that there exists $k \in \mathbb{N}$ with $A \subset \Omega \times [0,k] \times [-k,k]^d$.
    \item $||X||_{p} := \mathbb{E} \left[|X|^p \right]$ for $0 < p < 1$ and $||X||_{p} := (\mathbb{E} \left[ |X|^p \right] )^{\frac{1}{p}}$ if $p \ge 1$.
    \item $||X||_{0} := \mathbb{E}[|X| \wedge 1]$, and $||X||_{\infty} := \inf \{ C \ge 0 : P(|X| \le C) = 1 \}$.
    \item $[\![ R,S ]\!] := \{ (\omega, t) \in \Omega \times \mathbb{R}_+ ; R(\omega) \le t \le S(\omega) \}$ for two $\mathcal{F}_t$-stopping times $R$ and $S$.
    \item $(\!( R,S ]\!] := \{ (\omega, t) \in \Omega \times \mathbb{R}_+ ; R(\omega) < t \le S(\omega) \}$ for two $\mathcal{F}_t$-stopping times $R$ and $S$.
    \item For $p \in (0, \infty]$, $B^p$ is the set of all $\phi \in \mathcal{P}$ such that
    \[
     ||\phi||_{p, T} := \sup_{(t, x) \in [0,T] \times \mathbb{R}^d} ||\phi(t,x)||_{p} < +\infty, 
    \]
    for all $T \in \mathbb{R}_+$.
    \item For $p \in (0, \infty]$, $B_{\text{loc}}^p$ is the set of all $\phi \in \mathcal{P}$ such that
    \[
     ||\phi||_{p, T, R} := \sup_{t \in [0,T]} \sup_{|x| \le R} ||\phi(t,x)||_{p} < +\infty,
    \]
    for all $T, R \in \mathbb{R}_+$.
    \item If $\tau$ is a $\mathcal{F}$-stopping time, we denote $\phi \in B_{\text{loc}}^p (\tau)$ if $\phi \mathds{1}_{[\![ 0, \tau]\!]} \in B_{\text{loc}}^p$, i.e.,
    \[
      \sup_{t \in [0,T]} \sup_{|x| \le R} ||\phi(t,x) \mathds{1}_{[\![ 0, \tau]\!]}(t)||_{p} < +\infty,
    \]
    for all $T, R \in \mathbb{R}_+$.
    \item $B_r(x) := \{ y \in \mathbb{R}^d \, ; \, |x - y| < r \}$ for $x \in \mathbb{R}^d$ and $r>0$.
\end{itemize}

The existence and uniqueness of solutions for the stochastic wave equation \eqref{wave-levy}, where \(\Lambda\) is replaced by a {\em Gaussian noise} \(W\), has been extensively studied since the seminal lecture notes by Walsh \cite{walsh86}; for additional references, see \cite{Dalang99, DalangWave, Millet-Sole99}. On the other hand, when $\nu$ satisfies 
\begin{equation}
\label{finite-variance}
\int_{\{|z| > 1\}} |z|^2 \nu (dz) < + \infty,
\end{equation}
\(\Lambda\) induces a square-integrable martingale with discontinuities, making it suitable for theories typically used for \(L^2\)-random measures, such as Gaussian noises. Consequently, under condition \eqref{finite-variance}, the existence and uniqueness of a solution to \eqref{wave-levy} can be established in a manner similar to the Gaussian case. Equation \eqref{wave-levy} in dimension $d=1$, with condition \eqref{finite-variance}, has been studied in \cite{balan28, balan33, balan38}. However, without condition \eqref{finite-variance}, \(\Lambda\) may have an infinite second moment. A well-known case is the \textit{\(\alpha\)-stable L\'evy white noise}, characterized by the L\'evy measure \( \nu_{\alpha} \) given by
\begin{equation}
\label{alpha-levy-m0}
\nu_{\alpha} (dz) = [ c_{+} \alpha z^{- \alpha -1} \mathds{1}_{(0, \infty)}(z) + c_{-} \alpha (-z)^{- \alpha -1} \mathds{1}_{(-\infty, 0)} (z)] \, dz,
\end{equation}
where \(c_{+}, c_{-} \ge 0\) and \(\alpha \in (0,2)\). Note that \(\int_{\mathbb{R}} |z|^2 \nu_{\alpha} (dz) = + \infty\), implying that \(\mathbb{E}|\Lambda(B)|^2= + \infty \) for all $B \in \mathcal{B}_b (\mathbb{R}_+ \times \mathbb{R}^d)$ with positive Lebesgue measure.

\medskip

Despite the extensive literature on \eqref{wave-levy} driven by \(L^2\)-random measures, to our knowledge \cite{balan49} is the only work that addresses the stochastic wave equation \eqref{wave-levy} driven by a multiplicative L\'evy white noise which may have infinite variance. Specifically, in \cite{balan49}, it was proved the existence of a mild solution to \eqref{wave-levy} if $\nu$ satisfies the following conditions:
\begin{equation}
    \label{cond-Balan}
    \begin{cases}
         \int_{\{|z| > 1\}} |z|^q \nu (dz) < + \infty \quad & \text{if  \(d=1\), for some  \(q \in (0,2),\)} \\[10pt]
         \int_{\{|z| \le 1\}}|z|^p \nu (dz) +  \int_{\{|z| > 1\}} |z|^q \nu (dz) < + \infty \quad & \text{if \(d=2,\) for some \(0 < q \le p < 2\).}\\
    \end{cases}
\end{equation}
The regularity of the solution paths is also studied in \cite{balan49}. The techniques used in \cite{balan49} to construct a solution to \eqref{wave-levy} are based on the breakthrough results in \cite{chong1} related to the existence of a mild solution of the heat equation in $\mathbb{R}^d$.

\medskip

The goal of this article is to establish the existence and uniqueness of solutions for the stochastic wave equation \eqref{wave-levy} under conditions which are weaker than \eqref{cond-Balan}. 

In Section 2, we prove the existence of a unique (up to modifications) mild solution \(u\) of \eqref{wave-levy} that satisfies \(u \in B^p_{\text{loc}} (T_N)\) for all \(N \in \mathbb{N}\), where \(\{T_N\}_{n \ge 1}\) is an increasing sequence of stopping times with \(T_N \to +\infty\) as \(N \to +\infty\). The main novelty of this section is the uniqueness of a solution to \eqref{wave-levy} for the class of random fields that lie in \(B^p_{\text{loc}} (T_N)\) for all \(N \in \mathbb{N}\), employing the same techniques and stopping times used in \cite{balan49,chong1}. Furthermore, we extend these results to a broader class of wave equations.

In Section 3, we use a different strategy to show the existence and uniqueness of solutions to \eqref{wave-levy} in a finite time interval, under conditions which are weaker than \eqref{cond-Balan}. To be precise, by employing {\em the past light-cone property} (PLCP) of the wave equation, in Theorem \ref{wave_D}, we construct a solution to \eqref{wave-levy} without imposing the condition $\int_{\{|z| > 1\}} |z|^q \nu (dz) < + \infty$ for some $q>0$. In particular, our results show that:
\begin{itemize}
    \item If \(d=1\), there exists a unique solution to \eqref{wave-levy} in the interval \([0,T]\), for a fixed \(T>0\), under Assumption \ref{ICH}.
    \item If \(d=2\), there exists a unique solution to \eqref{wave-levy} in the interval \([0,T]\), for a fixed $T>0$, under Assumption \ref{ICH} and \(\int_{\{|z| \le 1\}} |z|^p \nu (dz) < +\infty\) for some \(p \in (0,2)\).
\end{itemize}
Our method for constructing a solution to equation \eqref{wave-levy}, using the PLCP, differs from the method in Section 2. For this, we use similar techniques as in \cite{balan27, CDH} for solving SPDEs on bounded domains. Additionally, we would like to point out that the uniqueness of the solution, using the PLCP approach in Section 3, is obtained in a different class of random fields compared to Section 2. Hence, it is natural to wonder how these two solutions are related. In Theorem \ref{time-comp}, we prove that these two solutions are identical almost surely for all \( (t,x) \in [0,T] \times \mathbb{R}^d \).

A core principle used throughout this article is the fact that the fundamental solution \(G_t\) of the wave operator satisfies the following property: for any given point \((t,x) \in \mathbb{R}_+ \times \mathbb{R}^d\), the function \((s,y) \mapsto G_{t-s}(x-y)\) has support in the conic region
\begin{equation}
\label{compact_supp}
\mathcal{C}_{t,x}:=\{(s,y) \in [0,t] \times \mathbb{R}^d \, ; \, |x-y| \le t - s\}.
\end{equation}
The region \( \mathcal{C}_{t,x} \) is called \textit{the past light-cone} or \textit{the domain of dependence}. In physics, the past light-cone illustrates causality, ensuring that the effects at a point are only due to sources within this cone. This ensures that solutions to the wave equation adhere to the principle of causality, i.e., the information or energy can only travel within the constraint set by the speed of wave propagation (see Theorem 14.1 of \cite{ves}).

The PLCP has also been used in \cite{DalangWave} for the study of the stochastic wave equation in dimension \(d=3\), driven by a colored Gaussian noise. Unlike the Gaussian noise, which typically influences the entire random field uniformly, a L\'evy noise can introduce abrupt changes or jumps. This makes the analysis of dependencies and influences within the past light-cone crucial for understanding how waves propagate in a heavy-tailed random field.

\medskip

We include a few comments about the stochastic heat equation driven by \(\Lambda\),
\begin{equation}
    \label{semilinear-heat}
    \begin{cases}
    \dfrac{\partial u}{\partial t} (t,x) = \dfrac{1}{2} \Delta u (t,x) + \sigma(u(t,x)) \dot{\Lambda} (t,x), \, \, \, & t > 0, \, x \in \mathbb{R}^d, \\
     u(0,x) = u_0(x),  \, \, \, &  x \in \mathbb{R}^d , \\
    \end{cases}
\end{equation}
where \(u_0\) is a deterministic bounded function on \(\mathbb{R}^d\). A \textit{mild solution} of \eqref{semilinear-heat} is a predictable random field \(u\) that satisfies
\[
u(t,x) = w_0 (t,x) + \int_0^t \int_{\mathbb{R}^d} \rho_{t-s} (x-y) \sigma( u(s,y) ) \Lambda (ds,dy),
\]
where \(\rho_t(x)= (2 \pi t)^{- d/2} \exp\left( - \frac{|x|^2}{2t} \right) \mathds{1}_{\{ t > 0 \}} \) and 
\[
w_0 (t,x) = \int_{\mathbb{R}^d} \rho_t (x-y) u_0 (y) dy.
\]
In \cite{SLB}, it was proved that \eqref{semilinear-heat} has a unique solution that satisfies
\[
\sup_{(t,x) \in [0,T] \times \mathbb{R}^d} \mathbb{E} \left[|u(t,x)|^p\right] < +\infty,
\]
if the L\'evy measure \(\nu\) satisfies
\[
\int_{\mathbb{R}} |z|^p \nu (dz) < +\infty,
\]
for some \(p \in [1,2]\), with \(p < 1 + \frac{2}{d}\). 

In \cite{chong1}, it was proved for the first time that the heat equation \eqref{semilinear-heat} driven by L\'evy noise \(\Lambda\) has a mild solution with unbounded \(p\)-th moments. More precisely, the main result of \cite{chong1} shows that if there exist exponents \(p\) and \(q\) satisfying \(0 < q \le p < 1 + \frac{2}{d}\) and \(\frac{p}{1+(1 + \frac{2}{d} - p)} < q\), such that
\[
\int_{\{ |z| \le 1 \}} |z|^p \nu (dz) < +\infty \quad \text{and} \quad \int_{\{|z| > 1\}} |z|^q \nu (dz) < +\infty,
\]
then, the stochastic heat equation,
\begin{equation}
    \label{semilinear-heat-trunc}
    \begin{cases}
    \frac{\partial u}{\partial t} (t,x) = \frac{1}{2} \Delta u (t,x) + \sigma(u(t,x)) \dot{\Lambda}_N (t,x), \, \, \, & t > 0, \, x \in \mathbb{R}^d, \\
     u(0,x) = u_0(x),  \, \, \, &  x \in \mathbb{R}^d , \\
    \end{cases}
\end{equation}
has a mild solution \(u^{(N)}\) in the space \(B^p_{\text{loc}}\) for each \(N \in \mathbb{N}\), where \(\Lambda_N\) is the truncated noise given by
\begin{equation}
    \label{levy-noise0}
    \Lambda_N (B) =  b |B|  +  \int_{B \times \{ |z| \le 1 \}} z  \tilde{J}(dt,dx,dz) + \int_{B \times \{ 1 < |z| \le N h(x) \}} z J(dt,dx,dz),
\end{equation}
for $B \in \mathcal{B}_b ( \bR_+ \times \bR^d)$, and $ h(x) = 1 + |x|^\eta,$ with \(\eta > d/q\). Moreover, the random field $u$ defined by \(u(t,x) := u^{(N)} (t,x)\) on $\{ t \le \tau_N \}$ is a mild solution to \eqref{semilinear-heat}, where \(\tau_N\) is the stopping time given by
\begin{equation}
    \label{stop-time}
    \tau_N := \inf \left\{ T \in \mathbb{R}_+ \, ; \, \int_0^T \int_{\mathbb{R}^d} \int_{\mathbb{R}} \mathds{1}_{ \{|z| > N h(x) \}} J(dt,dx,dz) > 0  \right\},
\end{equation}
for each \(N \in \mathbb{N}\).

To the best of our knowledge, the uniqueness of solutions of equation \eqref{semilinear-heat} for a globally Lipschitz function $\sigma$ remains an open problem, with the exception of the case $\sigma(u) = \beta u$, when $\beta>0$. As mentioned on page 13 in \cite{chong1}, the main issue in finding a unique mild solution to \eqref{semilinear-heat} is that it does not seem possible to find a complete subspace of $B^p_{\text{loc}}$ such that the stochastic-integral operator $\mathcal{J}_{N}$ given by
\begin{equation}
    \label{trunc-operator_heat}
    \mathcal{J}_{N}(\phi)(t,x) := w(t,x) + \int_0^t \int_{\mathbb{R}^d} \rho_{t-s}(x-y) \sigma(\phi(s,y)) \Lambda_N(ds,dy), \quad \text{for $\phi \in \mathcal{P}$},
\end{equation}
is a self-map. Consequently, due to the lack of the self-map property of $\mathcal{J}_{N}(\phi)$, it is not possible to establish the uniqueness of solutions for equation \eqref{semilinear-heat} via the Banach fixed-point theorem. A different strategy was employed in \cite{berger}, where it was demonstrated that there exists a unique mild solution to \eqref{semilinear-heat} when $\nu(-\infty, 0) = 0$ and $\sigma(u) = \beta u$, with $\beta > 0$.

\section{Existence and Uniqueness}
In this section, we establish the existence and uniqueness of a solution to \eqref{wave-levy} using the same approach as in \cite{balan49}. The primary novelty of this section, compared with the results of \cite{balan49}, is the uniqueness of a mild solution to \eqref{wave-levy}. More precisely, using \eqref{compact_supp}, we show that \eqref{wave-levy} has a unique (up to modification) mild solution that lies in \(B_{\text{loc}}^p (\tau_N)\) for each \(N \in \mathbb{N}\), where \(\tau_N\) is given by \eqref{stop-time}. Before presenting the main results of this section, we provide some preliminary results.

For the subsequent lemma, we use the following notation.
\begin{itemize}
    \item $\mathcal{G}: \mathbb{R}_+ \times \mathbb{R}^d \rightarrow \mathbb{R}$ is a measurable non-negative function.
    \item $\textbf{g}_p(t,x) := \mathcal{G}_{t}^p(x) + \mathcal{G}_{t}(x) \mathds{1}_{\{p \ge 1\}}$ for $p > 0$.
    \item $\mathcal{I}^{(t,x)}(\phi)(s,y) := \mathcal{G}_{t-s}(x-y) \phi(s,y) \mathds{1}_{\{t > s\}}$, for any $(t,x) \in \mathbb{R}_+ \times \mathbb{R}^d$ and $\phi \in \mathcal{P}$.
    \item $|| \cdot ||_{\Lambda_N,p}$ denotes the Daniell mean of $\Lambda_N$.
    \item Given \(\phi \in \mathcal{P}\), we define the random field \(\mathfrak{T}_N(\phi)\) given by
    \[
    \mathfrak{T}_N ( \phi ) (t,x) := \int_0^t \int_{\mathbb{R}^d} \mathcal{G}_{t-s}(x-y) \sigma( \phi (s,y) ) \Lambda_N(ds,dy),
    \]
    for all \((t,x) \in \mathbb{R}_+ \times \mathbb{R}^d\).
\end{itemize}

\begin{lemma}[Lemma 3.3 in \cite{chong1}]
\label{lem33chong}
Let $T > 0$ and $N \in \mathbb{N}$ be fixed. Assume that there exist $0 < q \le p$ such that 
\[
\int_0^T \int_{\mathbb{R}^d } \textbf{g}_p (t,x) \, dx \, dt < +\infty, \, \, \text{and} \, \, \int_{\{ |z| \le 1 \} } |z|^p \nu (dz) + \int_{\{ |z| > 1 \} } |z|^q \nu (dz) < +\infty.
\]
Additionally, if $p < 1$, we assume that $b = \int_{\{ | z | \le 1 \} } z \nu (dz)$. Then, we have the following estimations.
\begin{enumerate}
    \item For any $(t,x) \in [0,T] \times \mathbb{R}^d$, there exists a constant $C= C(T,N,p) >0$ such that for all $\phi \in \mathcal{P}$, we have: 
\begin{equation}
    \label{bdgl1}
    \begin{split}
        & \mathbb{E} \left[ | \mathfrak{T}_N ( \phi ) (t,x) |^p \right] \le || \mathcal{I}^{(t,x)}(\sigma(\phi)) ||_{\Lambda_N,p}^{p \vee 1} \\
        & \le C \int_0^t \int_{\mathbb{R}^d} \textbf{g}_p (t-s,x-y) \left( 1 + \mathbb{E} \left[ | \phi(s,y) |^p \right] \right) h(y)^{p-q} \, dy \, ds.
    \end{split}
\end{equation}
    \item For any $(t,x) \in [0,T] \times \mathbb{R}^d$, there exists a constant  $C= C(T,N,p) >0$ such that for all $(t,x) \in [0,T ] \times \mathbb{R}^d$ and $\phi_1, \, \phi_2 \in \mathcal{P}$ with $\mathfrak{T}_N ( \phi_1 ) (t,x), \; \mathfrak{T}_N ( \phi_2 ) (t,x) < + \infty$ a.s., we have:
\begin{equation}
    \label{bdgl2}
    \begin{split}
        & \mathbb{E} \left[ \left| \mathfrak{T}_N ( \phi_1 ) (t,x)  - \mathfrak{T}_N ( \phi_2 ) (t,x) \right|^p \right]  \le || \mathcal{I}^{(t,x)}(\sigma(\phi_1) - \sigma(\phi_2)) ||_{\Lambda_N,p}^{p \vee 1} \\
        & \le C \int_0^t \int_{\mathbb{R}^d} \textbf{g}_p (t-s,x-y) \mathbb{E} \left[ | \phi_1(s,y) - \phi_2(s,y) |^p \right] h(y)^{p-q} \, dy \, ds.
    \end{split}
\end{equation}
\end{enumerate}
\end{lemma}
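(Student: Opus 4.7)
The statement is quoted directly as Lemma 3.3 of \cite{chong1}, so my plan is to follow that argument, reducing the claim to an explicit computation of the Daniell mean of $\Lambda_N$ on integrands of the form $\mathcal{I}^{(t,x)}(\sigma(\phi))$. The first inequality in each of \eqref{bdgl1} and \eqref{bdgl2} is immediate from the general stochastic-integration framework of \cite{bit1}: by construction, the Daniell mean $\|\cdot\|_{\Lambda_N,p}$ dominates the $L^p$-norm of the associated stochastic integral, with the extra power $p \vee 1$ absorbing the subadditivity defect that arises when $p<1$. So the only real work is the second inequality.

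To estimate the Daniell mean, I would split $\Lambda_N$ according to its L\'evy--It\^o decomposition into three pieces: (i) the drift $b\,ds\,dy$, (ii) the compensated small-jump part over $\{|z|\le 1\}$, and (iii) the pure Poisson large-jump part over $\{1<|z|\le Nh(y)\}$. For piece (ii), the BDG/Kunita-type inequality for compensated Poisson integrals at exponent $p$ produces a bound of the form $\int\!\int \mathcal{G}_{t-s}(x-y)^{p}|\sigma(\phi(s,y))|^{p}\,dy\,ds \cdot \int_{\{|z|\le 1\}}|z|^p \nu(dz)$, which yields the $\mathcal{G}_{t-s}^p$ contribution to $\textbf{g}_p$. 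For piece (iii), one uses that on $\{|z|\le Nh(y)\}$ the elementary bound $|z|^p=|z|^{p-q}|z|^q\le (Nh(y))^{p-q}|z|^q$ converts the $p$-moment integral of large jumps into the finite $q$-moment integral, at the cost of the factor $h(y)^{p-q}$; this is precisely where the polynomial weight $h$ enters and why one only needs $\int_{\{|z|>1\}}|z|^q\nu(dz)<\infty$. Piece (i) contributes only when $p\ge 1$ (since otherwise the drift is absorbed into the small-jump piece via the hypothesis $b=\int_{\{|z|\le 1\}}z\,\nu(dz)$), and the triangle inequality yields the linear $\mathcal{G}_{t-s}\,\mathds{1}_{\{p\ge 1\}}$ term in $\textbf{g}_p$.

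Combining the three pieces and using linear growth of $\sigma$ (which follows from global Lipschitz continuity together with $|\sigma(0)|<\infty$) gives \eqref{bdgl1} with the factor $(1+\mathbb{E}|\phi(s,y)|^p)$. For \eqref{bdgl2}, the same decomposition applied to the integrand $\mathcal{I}^{(t,x)}(\sigma(\phi_1)-\sigma(\phi_2))$ and the Lipschitz bound $|\sigma(\phi_1)-\sigma(\phi_2)|\le L|\phi_1-\phi_2|$ remove the ``$1+$'' and yield the stated difference estimate; the assumption $\mathfrak{T}_N(\phi_i)(t,x)<\infty$ a.s.\ just ensures that the integrals are well-defined so that the linearity $\mathfrak{T}_N(\phi_1)-\mathfrak{T}_N(\phi_2)=\int\mathcal{G}_{t-s}(x-y)[\sigma(\phi_1)-\sigma(\phi_2)](s,y)\,\Lambda_N(ds,dy)$ can be invoked.

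The main obstacle is the low-moment regime $p<1$, where there is no martingale BDG inequality in the usual form and one must work directly with the Daniell mean; this is handled in \cite{bit1,chong1} by a delicate interpolation between the $L^0$ and $L^1$ norms, exploiting the recentering hypothesis $b=\int_{\{|z|\le 1\}}z\,\nu(dz)$ to make the small-jump integral a true martingale even without finite expectation of $|z|$. I would simply invoke that framework rather than re-deriving the $p<1$ Daniell-mean estimate from scratch, thereby obtaining the lemma as a direct application of Lemma 3.3 in \cite{chong1} to the kernel $\mathcal{G}$ in place of the heat kernel $\rho$.
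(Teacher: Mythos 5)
Your proposal is correct and matches the paper's treatment: the paper gives no independent proof of this lemma, simply citing Lemma 3.3 of \cite{chong1} (applied with the general kernel $\mathcal{G}$ in place of the heat kernel), which is exactly the reduction you carry out, and your sketch of the L\'evy--It\^o decomposition, the bound $|z|^p\le (Nh(y))^{p-q}|z|^q$ on the truncation region, and the recentering $b=\int_{\{|z|\le 1\}}z\,\nu(dz)$ for $p<1$ reproduces the internal mechanics of that cited argument. The only point you gloss over is the regime $p\ge 2$ (needed here for $d=1$), where the standard Bichteler--Jacod/Kunita inequality acquires an extra $(\int |H|^2)^{p/2}$ term; the paper handles this with a one-line appeal to the maximal inequality (15) of \cite{balan33}.
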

The case \( p \ge 2 \) in Lemma \ref{lem33chong} follows by using the maximal inequality (15) in \cite{balan33}.

Lemma \ref{lem33chong} will be a fundamental tool throughout this section. For the application of Lemma \ref{lem33chong}, we have to consider the following assumption on \(\nu\). In particular, if $d=1$, we can extend the constraint \( q \in (0,2) \) in \eqref{cond-Balan} to \( q \in (0, +\infty) \).

\begin{assumption}
\label{wave-levy-assumption}
\textit{(i)} For \(d=1\), there exists \(q \in (0, +\infty)\) such that 
    \[
        \int_{\{|z| > 1\}} |z|^q \nu(dz) < + \infty.
    \]
\textit{(ii)} For \(d=2\), there exist \(0 < q \le p < 2\) such that
    \begin{equation}
        \label{condLevymeasure_d=2}
        \int_{\{|z| \le 1\}} |z|^p \nu(dz) + \int_{\{|z| > 1\}} |z|^q \nu(dz) < + \infty.
    \end{equation}
    Additionally, if \(p < 1\), we assume \(b = \int_{\{|z| \le 1\}} z \nu(dz)\).
\end{assumption}

Notice that the fundamental solution $G_t (x)$ of the wave operator given by \eqref{fund-sol} satisfies:
\begin{equation}
\label{fund-G}
 \int_{\mathbb{R}^d} G_{t}^p (x) dx = \begin{cases}
    2^{1-p} \, t \quad  & \text{for any $p>0$ if } d=1, \\
    \frac{(2 \pi)^{1-p}}{2-p} \, t^{2-p} \quad & \text{for any $p \in (0,2)$ if \(d=2\)},
\end{cases}
\end{equation}
for all $t \in \mathbb{R}_+$. We denote $g_p(t,x) = G_{t}^p (x) + G_{t}(x) \mathds{1}_{\{p \ge 1\}}$. In the following remark, we explain why we can extend the value of $q$ to interval $(0, + \infty)$.
\begin{remark}
\label{d1-small}
For equation \eqref{wave-levy} in dimension \(d=1\), there is no need to impose the \(p\)-integrability condition on the small jumps, i.e., \(\int_{\{ |z| \le 1 \} } |z|^p \nu (dz) < + \infty\) since this condition is automatically satisfied for all \(p \ge 2\) by \eqref{levy-measure0}. Note that if \( d=1\), then \(\int_0^T \int_{\mathbb{R}} G_t^p (x) \, dx \, dt < +\infty\) for all \(p > 0\). Moreover,
\[
\int_{\{ |z| \le 1 \} } |z|^p \nu (dz) \le \int_{\{ |z| \le 1 \} } |z|^2 \nu (dz) < +\infty, \quad \text{for all } p \ge 2.
\]
Thus, for \(d=1\), we can apply Lemma \ref{lem33chong} with \(\mathcal{G}_t = G_t\), and choose any value \(p \ge 2 \vee q\). Conversely, for equation \eqref{wave-levy} in dimension \(d=2\), we must impose condition \eqref{condLevymeasure_d=2}, since \(\int_0^T \int_{\mathbb{R}^d} G_t^p(x) \, dx \, dt < +\infty\) only holds for \(p \in (0,2).\)
\end{remark}
To proceed with the proofs of the main results, we need the following lemma; its proof can be found in \cite{berger0}.
\begin{lemma}
\label{lemma35}
For any $\beta_1> -1, \ldots, \beta_n > -1$,
\[
\int_{T_n (t)} \prod_{j = 1}^n (t_{j+1} - t_j)^{\beta_j} d t_1  \ldots d t_n = \frac{ \prod_{j=1}^n \Gamma(\beta_j + 1) }{\Gamma( \sum_{j=1}^n \beta_j +n + 1 )} t^{ \sum_{j=1}^n \beta_j + n } ,
\]
where $ T_n (t ) := \{ (t_1, \ldots, t_n ) \in (0,t)^n \, ; \, t_1 < \ldots< t_n \} $ and $t = t_{n+1}$.
\end{lemma}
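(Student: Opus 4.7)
The plan is to establish the identity by induction on $n$, reducing the integral at each step via a single one-dimensional Beta integral. The only analytic input needed is the Beta function identity
\[
\int_0^1 u^{a}(1-u)^{b}\,du \;=\; \frac{\Gamma(a+1)\Gamma(b+1)}{\Gamma(a+b+2)}, \qquad a,b>-1.
\]
For the base case $n=1$, a direct antiderivative calculation gives
\[
\int_0^t (t-t_1)^{\beta_1}\,dt_1 \;=\; \frac{t^{\beta_1+1}}{\beta_1+1} \;=\; \frac{\Gamma(\beta_1+1)}{\Gamma(\beta_1+2)}\,t^{\beta_1+1},
\]
which matches the stated formula.

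For the inductive step I would fix $n\ge 2$, assume the identity for $n-1$, and integrate first over the innermost variable $t_n$, which appears in the product only through the two factors $(t_n-t_{n-1})^{\beta_{n-1}}$ and $(t-t_n)^{\beta_n}$. The substitution $u=(t_n-t_{n-1})/(t-t_{n-1})$ turns this inner integral into a Beta integral that evaluates to
\[
(t-t_{n-1})^{\beta_{n-1}+\beta_n+1}\cdot \frac{\Gamma(\beta_{n-1}+1)\Gamma(\beta_n+1)}{\Gamma(\beta_{n-1}+\beta_n+2)}.
\]
Plugging this back, the remaining integration over $(t_1,\ldots,t_{n-1})\in T_{n-1}(t)$ has exactly the shape of the original integral, now in $n-1$ variables with modified exponents $\beta_j'=\beta_j$ for $j\le n-2$ and $\beta_{n-1}'=\beta_{n-1}+\beta_n+1$ (still $>-1$). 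Applying the inductive hypothesis and simplifying --- the factor $\Gamma(\beta_{n-1}'+1)=\Gamma(\beta_{n-1}+\beta_n+2)$ produced by the hypothesis cancels against the denominator of the Beta factor, while the exponent of $t$ evaluates to $\sum_{j=1}^{n-1}\beta_j'+(n-1)=\sum_{j=1}^n\beta_j+n$ --- yields the desired identity.

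The argument is essentially mechanical; the only ``obstacle'' is clerical, namely carefully tracking the exponents after the Beta integral step and confirming that the Gamma factors telescope to $\prod_{j=1}^n \Gamma(\beta_j+1)\big/\Gamma\bigl(\sum_{j=1}^n\beta_j+n+1\bigr)$. An alternative route avoiding induction is to recognize the integral as a Dirichlet integral on the standard simplex via the change of variables $u_0=t_1$ and $u_j=t_{j+1}-t_j$ for $j=1,\ldots,n$, which enforces the constraint $u_0+\cdots+u_n=t$ and reduces the claim to the classical Dirichlet formula.
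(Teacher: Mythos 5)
Your proof is correct. The paper itself does not prove Lemma \ref{lemma35}; it only cites \cite{berger0}, so there is no in-paper argument to compare against. Your induction on $n$ via the one-dimensional Beta integral is the standard route: the base case, the substitution $u=(t_n-t_{n-1})/(t-t_{n-1})$, the observation that the new exponent $\beta_{n-1}+\beta_n+1>-1$, and the telescoping of the Gamma factors (with $\sum_{j=1}^{n-1}\beta_j'+n-1=\sum_{j=1}^{n}\beta_j+n$) all check out. The alternative you mention --- the substitution $u_j=t_{j+1}-t_j$ reducing the claim to the Dirichlet integral over the simplex $\{u_0+\cdots+u_n=t\}$ with $a_0=1$, $a_j=\beta_j+1$ --- is equally valid and arguably cleaner, as it handles all $n$ at once.
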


We follow a strategy similar to \cite{chong1} to establish the existence of a mild solution to the stochastic heat equation \eqref{semilinear-heat}. More precisely, we first show that the stochastic wave equation driven by \(\Lambda_N\),
\begin{equation}
    \label{wave-trunc}
    \begin{cases}
        \dfrac{\partial^2 u }{\partial t^2 } (t,x) = \Delta u (t,x)  + \sigma(u(t,x)) \dot{\Lambda}_N (t,x),  \quad & t > 0, \, x \in \mathbb{R}^d, \, \\
        u(0,x) = u_0(x), \quad \quad \dfrac{\partial u }{\partial t} (0,x) = v_0(x), \quad &  x \in \mathbb{R}^d, \\
    \end{cases}
\end{equation}
has a unique mild solution \(u^{(N)}\) in \(B^p_{\text{loc}}\) for each \(N \in \mathbb{N}\), i.e., \(u^{(N)}\) is the only (up to modifications) random field in \(B^p_{\text{loc}}\) satisfying
\begin{equation}
    \label{mild-trunc}
     u^{(N)}(t,x) = w(t,x) + \int_0^t \int_{\mathbb{R}^d} G_{t-s}(x-y) \sigma( u^{(N)}(s,y) ) \Lambda_N (ds,dy).
\end{equation}
We define the operator $\mathcal{T}_N : \mathcal{P} \to \mathcal{P}$ by
\begin{equation}
    \label{trunc-operator}
    \mathcal{T}_N ( \phi ) (t,x) := w(t,x) + \int_0^t \int_{\mathbb{R}^d} G_{t-s}(x-y) \sigma( \phi (s,y) ) \Lambda_N(ds,dy),
\end{equation}
for any $\phi \in \mathcal{P}$. By Lemma 6.6 of \cite{chong2}, the random field $\mathcal{T}_N (\phi)$ admits a predictable modification. We will always work with this modification.

Utilizing the compact support property of \(G_t\), we establish the self-mapping property of \(\mathcal{T}_N\) in \(B^p_{\text{loc}}\). Subsequently, the existence of a unique mild solution for \eqref{wave-trunc} is a consequence of the Banach fixed-point theorem applied to the operator \(\mathcal{T}_N\). It is relevant to highlight that this particular approach cannot be extended to address the uniqueness of a solution to equation \eqref{semilinear-heat}. This limitation arises from the fact that the operator \(\mathcal{J}_N\), as delineated in \eqref{trunc-operator_heat}, is well-defined within the spaces \(B^p\) to \(B_{\text{loc}}^p\), but lacks self-mapping attributes in \(B_{\text{loc}}^p\), as mentioned in the previous section.

\begin{theorem}
    \label{wave-jimenez0}
Assume that Assumptions \ref{ICH} and \ref{wave-levy-assumption} are satisfied. Then, for any fixed $N \in \mathbb{N}$, equation \eqref{wave-trunc} has a unique (up to modifications) mild solution $u^{(N)}$ that satisfies  
\[
    \sup_{t \in [0,T] } \sup_{ |x| \le R} \mathbb{E} \left[ |u^{(N)} (t,x) |^p \right]  < +\infty ,
\]
for all $T > 0$ and $R > 0$, where $p$ is any arbitrary value such that $p \ge q \vee 2$ if $d=1$, and $p$ is the exponent from Assumption \ref{wave-levy-assumption} if $d=2$.

\end{theorem}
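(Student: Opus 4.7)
The approach is Picard iteration for the map $\mathcal{T}_N$ of \eqref{trunc-operator}, where the self-map property in $B^p_{\text{loc}}$ and the contraction-type estimate both rely on the compact support of $G_{t-s}(x-\cdot)$ inside the past light-cone $\mathcal{C}_{t,x}$ from \eqref{compact_supp}, together with the moment bounds of Lemma \ref{lem33chong} applied with $\mathcal{G}=G$. First I would verify the self-map property: for $\phi\in B^p_{\text{loc}}$ and fixed $T,R>0$, inequality \eqref{bdgl1} evaluated at $(t,x)\in[0,T]\times\overline{B_R(0)}$ has an integrand supported in $\{(s,y):0\le s\le T,\,|y|\le R+T\}$, so both $h(y)^{p-q}$ and $\mathbb{E}[|\phi(s,y)|^p]$ can be replaced by their suprema on that set. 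Using \eqref{fund-G} and \eqref{ICH-eq}, this produces a finite bound for $\|\mathcal{T}_N(\phi)\|_{p,T,R}$. For $d=1$, the choice $p\ge q\vee 2$ enables the small-jump $p$-integrability needed by Lemma \ref{lem33chong} via Remark \ref{d1-small}.

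Next I would set $u^{(N,0)}:=w$ and $u^{(N,n+1)}:=\mathcal{T}_N(u^{(N,n)})$, using the predictable modification from Lemma 6.6 of \cite{chong2}. For fixed $T,R>0$, convergence of the iterates in $\|\cdot\|_{p,T,R}$ follows from a \emph{light-cone-adapted} supremum:
\[
M_n(t):=\sup_{|x|\le R+T-t}\mathbb{E}\bigl[|u^{(N,n+1)}(t,x)-u^{(N,n)}(t,x)|^p\bigr],\qquad t\in[0,T].
\]
When $|x|\le R+T-t$ and $(s,y)\in\mathcal{C}_{t,x}$, the inequality $|y|\le R+T-s$ holds, so the integrand in \eqref{bdgl2} is controlled by $M_{n-1}(s)$; combining this with \eqref{fund-G} and the boundedness of $h(y)^{p-q}$ on $\overline{B_{R+T}(0)}$ yields
\[
M_n(t)\le C\int_0^t M_{n-1}(s)\,(t-s)^{\beta}\,ds,
\]
with $\beta=1$ for $d=1$ and $\beta=2-p$ for $d=2$. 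Iterating $n$ times and invoking Lemma \ref{lemma35} produces
\[
M_n(T)\le \frac{\bigl(C\,\Gamma(\beta+1)\bigr)^n}{\Gamma(n(\beta+1)+1)}\,T^{n(\beta+1)}\sup_{s\in[0,T]}M_0(s),
\]
which is summable in $n$ by Stirling. Since $\overline{B_R(0)}\subset\overline{B_{R+T-t}(0)}$ for $t\in[0,T]$, summability transfers to $\sum_n\|u^{(N,n+1)}-u^{(N,n)}\|_{p,T,R}$, so $u^{(N,n)}$ converges to some $u^{(N)}\in B^p_{\text{loc}}$. Passing to the limit in \eqref{mild-trunc} via the second inequality of \eqref{bdgl2} shows $u^{(N)}=\mathcal{T}_N(u^{(N)})$. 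Uniqueness in $B^p_{\text{loc}}$ follows from the identical light-cone argument applied to the difference of two solutions: $M(t):=\sup_{|x|\le R+T-t}\mathbb{E}[|u(t,x)-v(t,x)|^p]$ is a priori finite because $u,v\in B^p_{\text{loc}}$, and the same inequality iterated via Lemma \ref{lemma35} forces $M\equiv 0$ on $[0,T]$.

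The main technical point is the choice of the \emph{time-dependent} radius $R+T-t$ in $M_n$: a naive supremum over $|x|\le R$ on the left would require a supremum over $|y|\le R+T$ on the right, and this radius inflation would propagate through the iteration without ever closing. Replacing $R$ by $R+T-t$ ensures the cone emanating from $\{|x|\le R+T-t\}$ at time $t$ remains inside $\{|x|\le R+T-s\}$ at earlier times $s\le t$, which closes the recursion and makes Lemma \ref{lemma35} directly applicable. Everything else, namely the summability of iterates, existence of predictable modifications, and passage to the limit in the stochastic integral, is then routine.
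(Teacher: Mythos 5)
Your proposal is correct and follows the same skeleton as the paper's proof: self-map property of $\mathcal{T}_N$ on $B^p_{\text{loc}}$ via the compact support of $G_{t-s}(x-\cdot)$, Picard iteration controlled by Lemma \ref{lem33chong}, factorial decay from Lemma \ref{lemma35}, and the same passage to the limit and uniqueness scheme. The one genuine difference is how you close the spatial recursion. The paper never takes a supremum inside the iteration: it carries the full $n$-fold integral over the simplex $T_n(t)\times(\mathbb{R}^d)^n$ and only at the end uses the telescoping triangle inequality \eqref{balls-wave-c} to confine all intermediate points $x_i$ to the fixed ball $\overline{B_{R+T}(0)}$, after which the constant $\sup_{[0,T]\times B_{R+T}}\mathbb{E}[|u_1-\Psi_0|^p]$ is pulled out and the remaining integral is the explicit $A_n^{(p)}(t)$. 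You instead take a supremum at every step but over the shrinking ball $|x|\le R+T-t$, which turns the estimate into a scalar Volterra recursion $M_n(t)\le C\int_0^t M_{n-1}(s)(t-s)^\beta\,ds$; your observation that $|x|\le R+T-t$ and $|x-y|\le t-s$ force $|y|\le R+T-s$ is exactly the geometric content of \eqref{balls-wave-c}, just consumed one step at a time. Your version is arguably cleaner to state (and your uniqueness argument, applied directly to $u-v$ rather than to $u_n^{(N)}-v^{(N)}$, is a genuine small simplification), while the paper's version sidesteps the minor technicality that $M_{n-1}(s)$, being an uncountable supremum of moments, must be checked (or replaced by a measurable majorant) to be a legitimate integrand; if you keep your formulation you should add a sentence handling that point, or simply iterate pointwise first and take suprema at the end as the paper does. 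You are also right that a supremum over a radius fixed independently of $t$ would not close, which is precisely why both remedies exist.
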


\begin{proof}
\textit{Step 1 ($\mathcal{T}_N$ is a self-map in $B^p_{\text{loc}}$).} Note that for any $0 \le s < t \le T$ and $|x| \le R$, we have:
\begin{equation}
    \label{supp1}
     \text{supp}(G_{t-s}(x - \cdot)) \subseteq \overline{B_T(x)} \subset \overline{B_{T + R}(0)}.
\end{equation}
Hence, 
\begin{equation}
    \label{hhk1}
    h(y)^{p-q} < C_T (1 + |R|^{\gamma}) \quad \text{for all } y \in \overline{B_{T + R}(0)},
\end{equation}
where $\gamma = \eta (p-q)$. Then, by Lemma \ref{lem33chong}, \eqref{hhk1}, and \eqref{supp1}, for any $\phi \in B_{\text{loc}}^p$, we have: 
\begin{equation}
    \label{self-map-T_K}
    \begin{split}
     & \mathbb{E} \left[ \Bigg|  \int_0^t \int_{\mathbb{R}^d} G_{t-s} (x-y) \sigma(\phi (s,y)) \Lambda_N (ds,dy) \Bigg|^p \right] \\
    & \le  C_T  \int_0^t \int_{\bR^d} g_p(t-s, x-y) \, \left(1 + \mathbb{E} \left[ | \phi (s,y) |^p \right] \right) h(y)^{p-q} dy ds \\
    & \le  C_T (1 + |R|^{\gamma})  \int_0^t \int_{\bR^d} g_p(t-s, x-y) \left(1 + \mathbb{E} \left[ | \phi (s,y) |^p \right] \right) dy ds \\
    & \le C_{p,T,R} \left( 1 +  \sup_{s \in [0,T] } \sup_{ |y| \le R + T}  \mathbb{E} \left[ | \phi (s,y) |^p \right] \right). \\ 
    \end{split}
\end{equation}
Therefore, by \eqref{ICH-eq} and \eqref{self-map-T_K}, we get:
\begin{equation}
\label{selfmap001}
\sup_{t \in [0,T]} \sup_{|x| \le R} \mathbb{E} \left[ | \mathcal{T}_N (\phi) (t,x) |^p \right] < +\infty, \qquad \text{for all } T, R \in \mathbb{R}_+.
\end{equation}

\textit{Step 2  (Convergence of the Picard iterations).} In this step, we consider the Picard iteration $u_n^{(N)}=\{u_n^{(N)}(t,x) \, ; \, t \ge 0, \, x \in \mathbb{R}^d \}$ given by: $u_0^{(N)} (t,x) := \Psi_0 (t,x)$, where $\Psi_0$ is an arbitrary element of $B^p_{\text{loc}}$, and $u_n^{(N)} := \mathcal{T}_N (u^{(N)}_{n-1} )$ for all $n \in \bN$, i.e.,
\begin{equation}
    \label{picard-trunc}
     u_n^{(N)} (t,x) = w(t,x) + \int_0^t \int_{\mathbb{R}^d} G_{t-s} (x-y) \sigma(u_{n-1}^{(N)} (s,y)) \Lambda_N (ds,dy), \, \,  \text{for } n \in \mathbb{N}.
\end{equation}
By \eqref{selfmap001}, it follows that $u_n^{(N)} \in B^p_{\text{loc}}$ for all $n \in \mathbb{N}$ by induction over $n$. Next, we will show that $\{ u_n^{(N)} \}_{n \in \mathbb{N}}$ is a Cauchy sequence in $B^p_{\text{loc}}$. By Lemma \ref{lem33chong}-(ii), we have:
\begin{equation}
    \label{iterative-n-ineq}
    \begin{split}
        & \mathbb{E} \left[ | u_{n}^{(N)} (t,x) - u_{n-1}^{(N)} (t,x) |^p \right]  \\
       & \le C_T \int_0^t \int_{\mathbb{R}^d} g_p(t-s,x-y) \mathbb{E} \left[ | u_{n-1}^{(N)} (s,y) - u_{n-2}^{(N)} (s,y) |^p \right] h(y)^{p-q} dy ds.
    \end{split}
\end{equation}
Iterating \eqref{iterative-n-ineq}, we get:
\begin{equation}
    \label{iterative-1}
    \begin{split}
        & \mathbb{E} \left[ | u_{n}^{(N)} (t,x) - u_{n-1}^{(N)} (t,x) |^p \right]  \\
         \le &   C_T^n \int_{T_n(t) } \int_{(\mathbb{R}^d)^n}
        \prod_{i=1}^n  g_p( t_{i+1} - t_{i}, x_{i+1} - x_{i} ) \\
        & \times \prod_{i=1}^n h(x_i)^{p-q} \mathbb{E} \left[ |u_{1}^{(N)} (t_1,x_1) - \Psi_0 (t_1,x_1)|^p \right]   d \textbf{x} d \textbf{t}, \\
    \end{split}
\end{equation}
where $\textbf{t} = (t_1,\ldots, t_n)$, $\textbf{x} = (x_1,\ldots, x_n)$ and we set $t_{n+1} = t$ and $x_{n+1} = x$. For a fixed $\textbf{t}$, note that the function $\mathbb{G}_{\textbf{t}}: (\mathbb{R}^d)^n \to [0, + \infty)$ given by
\[
\mathbb{G}_{\textbf{t}} ( \textbf{x} ):= \prod_{i=1}^n  g_p( t_{i+1} - t_{i}, x_{i+1} - x_{i} ) \mathds{1}_{T_n(t)} (\textbf{t}),
\]
has support in the set
\[
\{ \textbf{x} \in (\mathbb{R}^d)^n \, ; \,   |x_{i+1} - x_i | \le t_{i+1} - t_i, \, \,  \text{for } i =1, \ldots, n \}.
\]
Hence, if $t \in [0,T]$ and $|x| \le R$, the integral in \eqref{iterative-1} can be restricted to the values $\textbf{x}$ in the bounded set
\begin{equation}
    \label{balls-wave-c0}
    \left\{ \textbf{x} \in (\mathbb{R}^d)^n \, ; \, |x_i| \le R + T \, \,  \text{for } i =1, \ldots, n \right\}, 
\end{equation}
since
\begin{equation}
    \label{balls-wave-c}
    |x-x_i| \le  |x - x_n | + \sum_{k=i}^{n-1} |x_{k+1} - x_{k}| \le  (t - t_n) + \sum_{k =i}^{n-1} (t_{k+1} - t_{k})  = t - t_i \le t < T.
\end{equation}
Then, by \eqref{balls-wave-c0}, it follows that
\begin{equation}
    \label{wave-cc15}
    \begin{split}
        & \mathbb{E} \left[ | u_n^{(N)} (t,x) - u_{n-1}^{(N)} (t,x)  |^p \right] \le C_T^n   \sup_{s \in [0,T] } \sup_{ |y| \le R + T } \left( \mathbb{E} \left[ |u_{1}^{(N)} (t,x) |^p \right] + \mathbb{E} \Big[ | \Psi_0 (t,x)|^p \Big] \right)  \\ 
        & \qquad \qquad \qquad \qquad \qquad \qquad \int_{T_n(t)} \int_{(\mathbb{R}^d)^n} \prod_{i=1}^n g_p (t_{i+1} - t_{i}, x_{i+1} - x_{i} ) \prod_{i=1}^n h(x_i)^{p-q}  d \textbf{x} d \textbf{t}.
    \end{split}
\end{equation}
On the restricted set given by \eqref{balls-wave-c0}, we have:
\begin{equation}
    \label{hh_k}
    \prod_{i=1}^n h(x_i)^{p-q} \le \prod_{i=1}^n \Big[ 1 + (R+T)^\eta \Big]^{p - q} \le C_T^n (1 + |R|^{n \gamma}).
\end{equation}
Hence, by \eqref{wave-cc15} and \eqref{hh_k}, we obtain:
\begin{equation}
    \label{wave-ccc2}
    \begin{split}
         \mathbb{E} \left[ | u_n^{(N)} (t,x) - u_{n-1}^{(N)} (t,x)  |^p \right] & \le   C_{T,R}^n  \int_{T_n(t) } \int_{(\mathbb{R}^d)^n}
        \prod_{i=1}^n  g_p (t_{i+1} - t_{i}, x_{i+1} - x_{i} ) d \textbf{x}  d \textbf{t} \\
        & := C^n_{T,R} A_n^{(p)}(t). \\
    \end{split}
\end{equation}
Note that $A_n^{(p)}(t)$ does not depend on $x$. If $p < 1$, by \eqref{fund-G} and Lemma \ref{lemma35}, we get:
\[
A_n^{(p)}(t) = C_p^n \int_{T_n(t)} \prod_{j=1}^n (t_{j+1} - t_j)^\alpha  d \textbf{t} = C_p^n \frac{t^{(\alpha+1) n}}{\Gamma( (\alpha +1)n +1 )}, 
\]
where $C_p$ is a constant that depends on $p$, and
\[
\alpha = \begin{cases}
    1 \quad   & \text{if \(d=1,\)} \\
    2-p \quad & \text{if \(d=2.\)}
\end{cases}
\]
Hence,
\begin{equation}
    \label{A_1_SUP}
    \text{if $p<1$,} \quad \quad \sup_{t \in [0,T]}  A_n^{(p)} (t) \le \begin{cases}
          C_{p}^n \frac{T^{2 n}}{(  n ! )^{2}} \quad & \text{if \(d= 1,\)}  \\
          C_{p}^n \frac{T^{(3-p) n}}{(  n ! )^{3-p}} \quad & \text{if \(d= 2.\)}
      \end{cases}
\end{equation}

Assume that $p \ge 1$. If $d=1$, it holds that $G_t^p (x) = 2^{1-p} G_t (x)$, so we can proceed in the same way as for $p < 1$, which implies $\sup_{t \in [0,T]}  A_n^{(p)} (t) \le 2^{n(1-p)} C_{p}^n \frac{T^{2n}}{(n!)^2}$. If $d=2$, by \eqref{fund-G}, we have:
\[
\int_{\mathbb{R}^2} g_p(t,x) dx = c_p t^{2-p} +t \le  ( c_p + T^{p-1}) t^{2- p}, \qquad \text{with} \quad c_p = \frac{(2 \pi )^{1-p}}{2-p}.
\]
Hence, $\sup_{t \in [0,T]}  A_n^{(p)} (t) \le (c_p + T^{p-1})^n C_{p}^n \frac{T^{(3-p) n}}{(  n ! )^{3-p}}$. Thus,
\begin{equation}
    \label{A_22_SUP}
    \text{if $p \ge 1$,} \quad \quad \sup_{t \in [0,T]}  A_n^{(p)} (t) \le \begin{cases}
         2^{n(1-p)} C_{p}^n \frac{T^{2n}}{(n!)^2} \quad & \text{if \(d= 1,\)}  \\
         (c_p + T^{p-1})^n C_{p}^n \frac{T^{(3-p) n}}{(  n ! )^{3-p}}  \quad & \text{if \(d= 2.\)}
      \end{cases}
\end{equation}   
Therefore, for both cases \(p < 1\) and \(p \ge 1\), it holds
\begin{equation}
    \label{sup_AA}
    \sum_{n \ge 1} C_{T,R,p}^n \sup_{t \in [0,T]} A_n^{(p)}(t) < +\infty.
\end{equation}
By \eqref{wave-ccc2} and \eqref{sup_AA}, \(\{ u_n^{(N)} \}_{n \in \mathbb{N}}\) is a Cauchy sequence in \(B^p_{\text{loc}}\). Hence, there exists an element \(u^{(N)} \in B^p_{\text{loc}}\) such that \(u_n^{(N)} \xrightarrow[]{B^p_{\text{loc}}} u^{(N)}\) as \(n \to +\infty\).

\textit{Step 3 (Existence of the solution).} In this step, we verify that $u^{(N)}$ satisfies \eqref{mild-trunc}. First, we apply Lemma \ref{lem33chong}-(ii) with $\mathcal{G}_t = G_t$. Then,
\begin{equation}
    \label{daniell11}
    \begin{split}
        &\mathbb{E} \left[ \Big| \int_0^t \int_{\mathbb{R}^d} G_{t-s} (x-y) (\sigma(u_n^{(N)} (s,y)) - \sigma(u^{(N)} (s,y))) \Lambda_N (ds,dy) \Big|^p \right] \\
        & \le \left\| \mathcal{I}^{(t,x)} (\sigma(u_n^{(N)} )- \sigma(u^{(N)} )) \right\|_{\Lambda_N,p}^{p \vee 1} \\
        & \le  C_T \int_0^t \int_{\mathbb{R}^d } g_p(t-s,x-y) \mathbb{E} \left[ | u_n^{(N)} (s,y) - u^{(N)}(s,y) |^p \right] h(y)^{p-q} dy ds \\
        & \le C_{T,R,p} \sup_{s \in [0,T]} \sup_{ |y| \le T + R} \mathbb{E} \left[ | u_n^{(N)} (s,y) - u^{(N)} (s,y) |^p \right] \int_0^t \int_{\mathbb{R}^d} g_p (t-s, x-y) dy ds,
    \end{split}
\end{equation}
we used \eqref{supp1} and \eqref{hhk1} in the previous inequality. Now, if we let \(n\) approach infinity in \eqref{daniell11}, we find that for a fixed pair \((t,x) \in \mathbb{R}_+ \times \mathbb{R}^d\), the expression \(\mathcal{I}^{(t,x)}(\sigma(u_n^{(N)}))\) converges to \(\mathcal{I}^{(t,x)}(\sigma(u^{(N)}))\) with the semi-norm \(|| \cdot ||_{\Lambda_N,p}^{p \vee 1}\). This convergence implies that
\[
\lim_{n \to +\infty} \mathcal{T}_N(u_n^{(N)})(t,x) = \mathcal{T}_N(u^{(N)})(t,x) \quad \text{in } L^p(\Omega).
\]
Moreover, we have $u_n^{(N)} = \mathcal{T}_N(u_{n-1}^{(N)})$ for all $n \in \mathbb{N}$, and the sequence $\{ u_n^{(N)} \}_{n \in \mathbb{N}}$ converges to $u^{(N)}$ in the space $B^p_{\text{loc}}$ as $n \to +\infty$. Therefore, we conclude that $u^{(N)}$ satisfies \eqref{mild-trunc}.

\textit{Step 4 (Uniqueness of the solution).} Assume that there exists another process $v^{(N)} \in B^p_{\text{loc}}$ that satisfies \eqref{mild-trunc}, i.e., $\mathcal{T}_N (v^{(N)}) = v^{(N)}$. Then, by Lemma \ref{lem33chong}-(ii), we have:
\[
\begin{split}
    & \mathbb{E} \left[ | u_n^{(N)} (t,x) - v^{(N)} (t,x) |^p \right] \\
    & \le C_T \int_0^t \int_{\mathbb{R}^d} g_p (t-s,x-y) \mathbb{E} \left[ | u_{n-1}^{(N)} (s,y) - v^{(N)} (s,y) |^p \right] h(y)^{p-q} ds dy. \\
\end{split}
\]
Iterating the inequality above as in \eqref{iterative-1}, and following the same steps as in \eqref{wave-ccc2}, we get:
\[
\begin{split}
    & \mathbb{E} \left[ | u_n^{(N)} (t,x) - v^{(N)} (t,x) |^p \right] \\
    & \le  C_{p,T,R}^n \int_{T_n(t)} \int_{(\mathbb{R}^d)^n}
        \prod_{i=1}^n  g_p( t_{i+1} - t_{i}, x_{i+1} - x_{i} ) \mathbb{E} \left[ |u_{1}^{(N)} (t_1,x_1) - v^{(N)}(t_1,x_1)|^p \right]  d \textbf{x} d \textbf{t} \\
    &  \le  \sup_{s \in [0,T]} \sup_{|y| \le R + T} \mathbb{E} \left[ |u_{1}^{(N)} (s,y) - v^{(N)}(s,y)|^p \right] \, C_{p,T,R}^n  A_n^p (t) \to 0,
\end{split}
\]
as $n \to +\infty$. Therefore, \(u_n^{(N)} \to v^{(N)}\) in \(B_{\text{loc}}^p\) as \(n \to +\infty\). Alternatively, \(u_n^{(N)} \to u^{(N)}\) in \(B_{\text{loc}}^p\) as \(n \to +\infty\), which implies \(u^{(N)} = v^{(N)}\) in \(B_{\text{loc}}^p\).
\end{proof}

\begin{theorem}
\label{wave-jimenez1}
Let $\tau_N$ be the stopping time given by \eqref{stop-time}. Under the same assumptions as in Theorem \ref{wave-jimenez0} with $\eta > d/q$, equation \eqref{wave-levy} has a unique (up to modifications) mild solution $u$ that satisfies 
\[
\sup_{t \in [0,T] } \sup_{ |x| \le R} \mathbb{E} \left[ | u(t,x) |^p \mathds{1}_{ {[\![ 0, \tau_N]\!]} } (t) \right]  < +\infty, 
\]
for all $T > 0$ and $R > 0$, where $p$ is any arbitrary value such that $p \ge q \vee 2$ if $d=1$, and $p$ is the exponent in Assumption \ref{wave-levy-assumption} if $d=2$.
\end{theorem}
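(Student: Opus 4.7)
The plan is to construct $u$ by patching together the solutions $u^{(N)}$ from Theorem \ref{wave-jimenez0} along the stopping times $\tau_N$, and to establish uniqueness by the same Picard-style iteration that powers Theorem \ref{wave-jimenez0}.

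\textbf{Consistency across $N$.} First I would show that for $N' \ge N$, the fields $u^{(N)}$ and $u^{(N')}$ coincide on the stochastic interval $[\![0,\tau_N]\!]$. By the very definition of $\tau_N$ in \eqref{stop-time}, the Poisson measure $J$ puts no mass on $\{|z| > Nh(x)\}$ before time $\tau_N$, so $\Lambda_{N'}$ and $\Lambda_N$ agree on $[\![0,\tau_N]\!]$. Subtracting the two mild equations and multiplying by $\mathds{1}_{\{t \le \tau_N\}}$, the contribution coming from $\Lambda_{N'} - \Lambda_N$ vanishes, leaving an integral equation driven by $\Lambda_N$ for the difference $\Delta(t,x) := (u^{(N')}(t,x) - u^{(N)}(t,x))\mathds{1}_{\{t \le \tau_N\}}$. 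Applying Lemma \ref{lem33chong}-(ii) together with the Lipschitz property of $\sigma$ and the compact-support bounds \eqref{supp1}--\eqref{hhk1}, then iterating as in \eqref{iterative-1}--\eqref{wave-ccc2} and invoking Lemma \ref{lemma35} together with \eqref{A_1_SUP}--\eqref{A_22_SUP}, gives $\mathbb{E}[|\Delta(t,x)|^p] = 0$ for all $(t,x) \in [0,T] \times B_R(0)$, hence $u^{(N')} = u^{(N)}$ a.s.\ on $[\![0,\tau_N]\!]$.

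\textbf{Definition of $u$ and the mild equation.} Since $\eta > d/q$ forces $\tau_N \nearrow +\infty$ a.s., I can define the predictable field $u(t,x) := u^{(N)}(t,x)$ on $\{t \le \tau_N\}$, which by the previous step is unambiguous modulo null sets. To verify \eqref{mild-SPDEs}, note that on $\{t \le \tau_N\}$ the full noise $\Lambda$ and the truncated noise $\Lambda_N$ coincide on $[\![0,\tau_N]\!]$, so by \eqref{mild-trunc},
\[
\int_0^t \int_{\mathbb{R}^d} G_{t-s}(x-y)\sigma(u(s,y))\,\Lambda(ds,dy) = \int_0^t \int_{\mathbb{R}^d} G_{t-s}(x-y)\sigma(u^{(N)}(s,y))\,\Lambda_N(ds,dy) = u^{(N)}(t,x) - w(t,x).
\]
Letting $N \to \infty$ covers a.e.\ $(\omega,t)$. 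The integrability claim follows at once from $u\mathds{1}_{[\![0,\tau_N]\!]} = u^{(N)}\mathds{1}_{[\![0,\tau_N]\!]}$ and $u^{(N)} \in B^p_{\text{loc}}$.

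\textbf{Uniqueness.} Suppose $v$ is another mild solution of \eqref{wave-levy} with $v \in B^p_{\text{loc}}(\tau_N)$ for every $N$. On $\{t \le \tau_N\}$, the equation satisfied by $v$ with respect to $\Lambda$ collapses to the same equation with respect to $\Lambda_N$, by the same jump-free argument as in Step 1. The difference $(v - u^{(N)})\mathds{1}_{\{t \le \tau_N\}}$ then obeys an integral inequality to which Lemma \ref{lem33chong}-(ii) applies, and the iteration scheme of Theorem \ref{wave-jimenez0} forces it to vanish. Since $\tau_N \to +\infty$, we conclude $v = u$.

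\textbf{Main obstacle.} The delicate technical point is rigorously moving the indicator $\mathds{1}_{\{s \le \tau_N\}}$ inside the stochastic integrals so as to pass from equations driven by $\Lambda$ (or $\Lambda_{N'}$) to equations driven by $\Lambda_N$. This requires predictability of $\tau_N$ and the Daniell-mean machinery from \cite{bit1}, and it must be executed while keeping the spatial window compactly supported in $\overline{B_{T+R}(0)}$ via \eqref{supp1}, so that all applications of Lemma \ref{lem33chong} remain in a regime where the weight $h(y)^{p-q}$ is bounded and the iterative sum \eqref{sup_AA} converges.
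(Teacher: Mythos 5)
Your proposal follows essentially the same route as the paper: patch the truncated solutions $u^{(N)}$ together along the stopping times $\tau_N$, use the identity $\Lambda = \Lambda_N$ on $[\![0,\tau_N]\!]$ together with the local property of the stochastic integral (the paper's Lemma \ref{local-int}) to pass between the full and truncated equations, and prove uniqueness by the same weighted iteration driving $A_n^{(p)}(t)\to 0$. Your explicit consistency step ($u^{(N)}=u^{(N')}$ on $[\![0,\tau_N]\!]$) is a sound addition that the paper delegates to references, and your identification of the indicator-insertion as the key technical point is exactly where the paper invokes Lemma \ref{local-int} (note only that what is needed there is predictability of the set $[\![0,\tau_N]\!]$, which holds for any stopping time, not predictability of $\tau_N$ itself).
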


\begin{proof}
\textit{Step 1 (Existence of the solution).} The existence of a solution to \eqref{wave-levy} follows as in the proofs of Theorem 3.1 in \cite{chong1} and Theorem 3.5 in \cite{chong2} for the heat equation \eqref{semilinear-heat}. First, note that we have:
\begin{equation}
    \label{chomp1}
    \Lambda([0,t] \times A) = \Lambda_N([0,t] \times A)  \quad \text{on} \quad \{ t \le \tau_N \},
\end{equation}
for any $A \in \mathcal{B}_b (\mathbb{R}^d)$. Also note that \eqref{chomp1} holds for the L\'evy basis extension of $\Lambda$ given by Remark \ref{ext_levy}, for which we have: $\Lambda(B \cap ( {[\![ 0, \tau_N]\!]} \times \mathbb{R}^d )) = \Lambda_N(B \cap ( {[\![ 0, \tau_N]\!]} \times \mathbb{R}^d ))$ for all $B \in \tilde{\mathcal{P}}_b$. Therefore, by Lemma \ref{local-int} and \eqref{chomp1}, the random field \(u\) given by
\[
u(t,x) = u^{(1)}(t,x) \mathds{1}_{[\![ 0, \tau_1 ]\!]}(t) + \sum_{N=2}^\infty u^{(N)}(t,x) \mathds{1}_{(\!( \tau_{N-1}, \tau_N ]\!]}(t),
\]
is a mild solution to \eqref{wave-levy}, where \(u^{(N)}\) is the solution to \eqref{wave-trunc} given by Theorem \ref{wave-jimenez0}.

\textit{Step 2 (Uniqueness of the solution).} Assume that $v$ is another solution of \eqref{wave-levy} such that $v \in B_{\text{loc}}^p (\tau_N)$ for all $N \in \mathbb{N}$. By Lemma \ref{local-int}, Lemma \ref{lem33chong}, and \eqref{chomp1}, we have:
\[
\begin{split}
& \mathbb{E} \left[ | (u_n^{(N)} (t,x) - v(t,x)) \mathds{1}_{ {[\![ 0, \tau_N]\!]} } (t) |^p \right] \\
& \le C_{T} \int_0^t \int_{\mathbb{R}^d} g_p (t-s, x-y) \mathbb{E} \left[ | (u_{n-1}^{(N)} (s,y) - v(s,y)) \mathds{1}_{ {[\![ 0, \tau_N]\!]} } (s) |^p \right] h(y)^{p-q} dy ds.
\end{split}
\]
Iterating the inequality above and using the same steps as in \eqref{wave-ccc2}, for $t\in[0,T]$ and $|x| \le R$, we obtain that:
\[
\begin{split}
& \mathbb{E} | (u_n^{(N)} (t,x) - v(t,x)) \mathds{1}_{ {[\![ 0, \tau_N]\!]} } (t) |^p \\
& \le C_{T,R}^n \Big( \sup_{s \in [0,T] } \sup_{ |y| \le R + T } \left[ \mathbb{E} \left[ |u_{1}^{(N)} (s,y) \mathds{1}_{ {[\![ 0, \tau_N]\!]} } (s) |^p \right]  + \mathbb{E} \Big[ | v (s,y) \mathds{1}_{ {[\![ 0, \tau_N]\!]} } (s) |^p \Big]  \right] \Big) A_n^{(p)} (t),
\end{split}
\]
where $A_n^{(p)}(t)$ is given by \eqref{wave-ccc2}. Using the fact that $\sup_{t \in [0,T]} A_n^{(p)} (t) \to 0$ as $n \to +\infty$, we conclude that 
\[
\sup_{t \in [0,T]} \sup_{|x| \le R} \mathbb{E} \left[ | (u_n^{(N)} (t,x) - v(t,x)) \mathds{1}_{ {[\![ 0, \tau_N]\!]} } (t) |^p \right] \to 0 \qquad \text{as } n \to +\infty,
\]
for all $R, T > 0$. On the other hand, note that $u_n^{(N)} \to u$ in $B^p_{\text{loc}} (\tau_N)$ as $n \to +\infty$ for all $N \in \mathbb{N}$. Then, $u(t,x) \mathds{1}_{ {[\![ 0, \tau_N]\!]} } (t) = v(t,x) \mathds{1}_{ {[\![ 0, \tau_N]\!]} } (t)$ a.s., and letting $N \to +\infty$, we obtain that $u(t,x) = v(t,x)$ a.s. for all \( (t,x) \in \mathbb{R}_+ \times \mathbb{R}^d\) due to $\tau_N \uparrow +\infty$ a.s. for $N \to +\infty$.
\end{proof}

Now, we investigate the stochastic wave equation driven by a more general heavy-tailed noise. Consider \( L= \{ L(B); B \in \cB_b( \bR_+ \times \bR^d) \} \) given by
\begin{equation}
    \label{general_levy}
         L (B) =   b |B|  + a W(B) +  \int_{B \times \{ |z| \le 1 \} } z  \tilde{J}(dt,dx,dz)
          +   \int_{B \times \{ |z| > 1 \} } z  J(dt,dx,dz), 
\end{equation}
where \( a>0 \) and \( W \) is a space-time Gaussian white noise, i.e., \(W:= \{W(A) \, ; \, A \in \cB_b (\bR_+ \times \bR^d ) \} \) is a zero mean Gaussian process with covariance \( E[W(A) W(B) ]= |A \cap B | \). In the case \( d=2 \), since \( \int_0^t \int_{\bR^d} G_{t-s}^2 (x-y) ds dy = +\infty \), there is no mild solution of \eqref{wave-levy} driven by \( L \) instead of \( \Lambda \). Therefore, using the same steps of Theorem \ref{wave-jimenez0} and Theorem \ref{wave-jimenez1}, we have the following result. 

\begin{corollary}
\label{coro-d1}
Under Assumptions \ref{ICH} and \ref{wave-levy-assumption}, the stochastic wave equation  
\begin{equation}
    \label{wave-levy-general}
    \begin{cases}
        \dfrac{\partial^2 u }{\partial t^2 } (t,x) = \dfrac{\partial^2 u }{\partial x^2 } (t,x)  + \sigma(u(t,x)) \dot{ L } (t,x),  \quad &t>0, \; x \in \mathbb{R},  \\
         u(0,x) =u_0(x), \quad \quad \dfrac{\partial u }{\partial t} (0,x)= v_0(x),  \quad & x \in \mathbb{R}, \\
    \end{cases}
\end{equation}
has a unique (up to modifications) mild solution \( u \) that satisfies
\[
\sup_{t \in [0,T] } \sup_{ |x| \le R} \bE \left[ | u (t,x) |^p \mathds{1}_{ {[\![ 0, \tau_N]\!]} } (t) \right]  < + \infty, \quad \text{for all $p \ge 2$,}
\]
for all \( T,R \in \bR_+ \) and \( N \in \bN \).
\end{corollary}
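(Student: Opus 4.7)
The plan is to repeat the Picard-iteration scheme from the proofs of Theorems \ref{wave-jimenez0} and \ref{wave-jimenez1} verbatim, substituting the pure-jump noise $\Lambda$ with the mixed noise $L$. Define the truncated noise
\[
L_N(B) := b|B| + aW(B) + \int_{B \times \{|z| \le 1\}} z \tilde J(dt,dx,dz) + \int_{B \times \{1 < |z| \le Nh(x)\}} z J(dt,dx,dz)
\]
and the operator $\mathcal{T}_N^L(\phi)(t,x) := w(t,x) + \int_0^t \int_{\mathbb{R}} G_{t-s}(x-y) \sigma(\phi(s,y)) L_N(ds,dy)$. By independence of $W$ and $J$, it decomposes as $\mathfrak{T}_N(\phi) + a I_W(\phi)$, where $\mathfrak{T}_N(\phi)$ is already controlled by Lemma \ref{lem33chong} and $I_W(\phi)(t,x) := \int_0^t \int_{\mathbb{R}} G_{t-s}(x-y) \sigma(\phi(s,y))\, W(ds,dy)$. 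It therefore suffices to derive $p$-moment estimates for $I_W$ of the same shape as \eqref{bdgl1} and \eqref{bdgl2}.

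For $p \ge 2$, writing $F := \sigma(\phi_1) - \sigma(\phi_2)$, the Burkholder-Davis-Gundy inequality followed by Minkowski's integral inequality gives
\[
\mathbb{E}\bigl|I_W(\phi_1)(t,x) - I_W(\phi_2)(t,x)\bigr|^p \le C_p \left( \int_0^t \!\!\int_{\mathbb{R}} G_{t-s}^2(x-y)\, \|F(s,y)\|_p^2 \, dy\, ds \right)^{p/2}.
\]
Two features specific to $d=1$ bring this into the desired form. First, the identity $G_t^2(x) = \tfrac12 G_t(x)$ holds pointwise. Second, the measure $G_{t-s}(x-y)\,dy\,ds$ on $[0,T] \times \mathbb{R}$ has total mass $t^2/2 \le T^2/2$, so Jensen's inequality with the convex function $u \mapsto u^{p/2}$ (valid since $p/2 \ge 1$) yields
\[
\left( \int_0^t \!\!\int_{\mathbb{R}} G_{t-s}(x-y)\, \|F(s,y)\|_p^2 \, dy\, ds \right)^{p/2} \le C_{p,T} \int_0^t \!\!\int_{\mathbb{R}} G_{t-s}(x-y)\, \|F(s,y)\|_p^p \, dy\, ds.
\]
Combining these observations with the Lipschitz property of $\sigma$ (for the contraction estimate), the analogous estimate with $\sigma(\phi)$ in place of $F$ (for the growth estimate), the trivial bound $G_{t-s} \le g_p(t-s,\cdot)$, and the fact that $h(y)^{p-q} \ge 1$, one obtains a bound for $I_W$ of exactly the same shape as in \eqref{bdgl1} and \eqref{bdgl2}. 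Adding the contribution of $\mathfrak{T}_N$ produces the unified inequality required for the Picard scheme.

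With these estimates in place, Steps 1-4 of the proof of Theorem \ref{wave-jimenez0} transfer verbatim and yield, for each $N \in \mathbb{N}$ and each $p \ge 2$, a unique mild solution $u^{(N)} \in B^p_{\text{loc}}$ of the equation driven by $L_N$. The gluing argument of Theorem \ref{wave-jimenez1}, based on the stopping times $\tau_N$ from \eqref{stop-time} and the identity $L(B \cap ({[\![ 0, \tau_N]\!]} \times \mathbb{R})) = L_N(B \cap ({[\![ 0, \tau_N]\!]} \times \mathbb{R}))$ for $B \in \tilde{\mathcal{P}}_b$, then assembles the $u^{(N)}$ into the desired mild solution of \eqref{wave-levy-general}. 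The main obstacle lies entirely in handling the Gaussian part: the BDG bound naturally produces the quantity $(\int\int G^2\,\|F\|_p^2)^{p/2}$, whose shape is incompatible with the linear-in-kernel form supplied by Lemma \ref{lem33chong}; reconciling the two requires simultaneously $p/2 \ge 1$ (for Jensen) and $G_t \in L^2(\mathbb{R}^d)$ (for finite variance of the Gaussian integrator), conditions that coincide only when $p \ge 2$ and $d=1$. This is precisely why the corollary must be confined to $d=1$; in $d=2$ the method collapses because $\int_0^T \int_{\mathbb{R}^2} G_{t-s}^2(x-y)\,dy\,ds = +\infty$, so no mild solution to a wave equation with Gaussian component can exist, as already noted in the paragraph preceding the statement of Corollary \ref{coro-d1}.
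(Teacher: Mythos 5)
Your proposal is correct and follows essentially the same route as the paper, which disposes of the corollary in one line by invoking the version of Lemma 3.3 of \cite{chong1} for L\'evy bases with a Gaussian component and then rerunning the arguments of Theorems \ref{wave-jimenez0} and \ref{wave-jimenez1}. Your BDG--Minkowski--Jensen derivation (using $G_t^2=\tfrac12 G_t$ and the finite mass $t^2/2$ of the kernel in $d=1$) is precisely the content of the Gaussian part of that cited lemma, so you are supplying the details the paper omits rather than taking a different path.
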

To prove Corollary \ref{coro-d1}, we applied the version of Lemma 3.3 from \cite{chong1} for L\'evy noises with a Gaussian component.

\medskip

We can extend Theorem \ref{wave-jimenez1} to a more general class of stochastic wave equations driven by multiplicative noises with a non-linear term \( \sigma(t,x,u) \dot{\Lambda} \) and drift \( f(t,x,u) \), i.e., we consider the stochastic wave equation with $d \le 2$,
\begin{equation}
    \label{wave-quasi}
    \begin{cases}
        \dfrac{\partial^2 u }{\partial t^2 } (t,x) = \Delta u (t,x) + f(t,x,u(t,x)) + \sigma(t,x,u(t,x)) \dot{\Lambda} (t,x), & t > 0, \ x \in \mathbb{R}^d, \\
        u(0,x) = u_0(x), \quad \dfrac{\partial u }{\partial t} (0,x) = v_0(x), & x \in \mathbb{R}^d,
    \end{cases}
\end{equation}
where \( u_0 \) and \( v_0 \) are the same initial conditions as in \eqref{wave-levy}. We impose the following conditions on the processes \( \sigma \) and \( f \).

\begin{assumption}
\label{sigma-assumption}
\( \sigma \) and \( f \) are functions defined as \( \Omega \times \mathbb{R}_+ \times \mathbb{R}^{d+1} \to \mathbb{R} \) which are measurable with respect to \( \tilde{\mathcal{P}} \times \mathcal{B} ( \mathbb{R}) \). In addition, we assume there exist positive processes \( \mathcal{C}_f, \, \mathcal{C}_{\sigma} \in B_{loc}^{\infty} \) such that for all \( (t,x) \in \mathbb{R}_+ \times \mathbb{R}^d \) and \( l_1, \, l_2 \in \mathbb{R} \), we have:
\begin{equation}
    \label{sigma-cond1}
    | \sigma(t,x,l_1 ) - \sigma(t,x,l_2) | \le  \mathcal{C}_{\sigma} (t,x)  \, | l_1 - l_2 | \quad \text{a.s.},
\end{equation}
and
\begin{equation}
    \label{b-cond1}
    | f(t,x,l_1 ) - f(t,x,l_2) | \le  \mathcal{C}_f (t,x)  \, | l_1 - l_2 | \quad \text{a.s.}
 \end{equation}
\end{assumption} 

Denote \( \sigma_0 (t,x) = \sigma(t,x,0) \) and \( f_0 (t,x) = f(t,x,0) \). A \textit{mild solution} to \eqref{wave-quasi} is a predictable random field \(u\) that satisfies
\begin{equation}
    \label{eqvolterra1}
    \begin{split}
        u(t,x) = w(t,x) & + \int_0^t \int_{\mathbb{R}^d} G_{t-s} (x-y) f(s,y, u(s,y)) \, dy \, ds \\
        & + \int_0^t \int_{\mathbb{R}^d} G_{t-s} (x-y) \sigma(s,y, u(s,y)) \, \Lambda(ds,dy). \\
    \end{split}
\end{equation}

\begin{theorem}
\label{jimenez-quasi-u}
Under Assumptions \ref{ICH}, \ref{wave-levy-assumption}, and \ref{sigma-assumption}, the following results hold.
\begin{itemize}
    \item[1.] For \( d=1 \), assume that there exists \( p \ge 2 \vee q \) such that \(\sigma_0\) and \(f_0\) belong to \( B^p_{\text{loc}} \), then \eqref{wave-quasi} admits a unique (up to modifications) mild solution \( u \) satisfying 
    \[
    \sup_{t \in [0,T]} \sup_{|x| \le R} \mathbb{E} \left[ \left| u(t,x) \right|^p \mathds{1}_{[\![ 0, \tau_N ]\!]}(t) \right] < +\infty,
    \]
    for all \( T, R > 0 \) and \( N \in \mathbb{N} \).

    \item[2.] For \( d=2 \), assume that \(\sigma_0\) and \(f_0\) belong to \( B^p_{\text{loc}} \), where \( p \) is the exponent in \eqref{condLevymeasure_d=2}. Additionally, if \( p < 1 \), we impose \( f(t,x,l) = 0 \) a.s. for all \((t,x,l) \in \mathbb{R}_+ \times \mathbb{R}^d \times \mathbb{R}\). Then, equation \eqref{wave-quasi} admits a unique (up to modifications) mild solution \( u \) satisfying
    \[
    \sup_{t \in [0,T]} \sup_{|x| \le R} \mathbb{E} \left[ \left| u(t,x) \right|^p \mathds{1}_{[\![ 0, \tau_N ]\!]}(t) \right] < +\infty,
    \]
    for all \( T, R > 0 \) and \( N \in \mathbb{N} \).
\end{itemize}
 
\end{theorem}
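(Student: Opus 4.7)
The plan is to follow the same two-step strategy as in Theorems \ref{wave-jimenez0} and \ref{wave-jimenez1}: first establish existence and uniqueness of a solution \(u^{(N)} \in B^p_{\text{loc}}\) to the \(\Lambda_N\)-truncated version of \eqref{wave-quasi} via a Banach fixed-point argument on the Picard operator
\[
\tilde{\mathcal{T}}_N(\phi)(t,x) := w(t,x) + \int_0^t\!\!\int_{\mathbb{R}^d}\! G_{t-s}(x-y) f(s,y,\phi(s,y))\, dy\, ds + \int_0^t\!\!\int_{\mathbb{R}^d}\! G_{t-s}(x-y) \sigma(s,y,\phi(s,y))\, \Lambda_N(ds,dy),
\]
and then patch the \(u^{(N)}\)'s along \(\tau_N\) exactly as in Theorem \ref{wave-jimenez1}. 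A crucial preliminary observation is that the past light-cone support of \(G_{t-s}(x-\cdot)\) for \((t,x) \in [0,T] \times \overline{B_R(0)}\) is contained in \(\overline{B_{R+T}(0)}\); combined with \(\mathcal{C}_\sigma, \mathcal{C}_f \in B^\infty_{\text{loc}}\), this yields a deterministic constant \(K_{T,R} := \|\mathcal{C}_\sigma\|_{\infty,T,R+T} \vee \|\mathcal{C}_f\|_{\infty,T,R+T} < \infty\) that plays the role of the global Lipschitz constant of \(\sigma\) in Theorem \ref{wave-jimenez0}, giving the almost-sure growth bounds \(|\sigma(s,y,l)| \le |\sigma_0(s,y)| + K_{T,R}|l|\) and \(|f(s,y,l)| \le |f_0(s,y)| + K_{T,R}|l|\) on the relevant integration region.

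For the stochastic-integral piece of \(\tilde{\mathcal{T}}_N(\phi)\), I would apply Lemma \ref{lem33chong} to the predictable integrand \(\sigma(s,y,\phi(s,y))\); the growth bound above together with \(\sigma_0 \in B^p_{\text{loc}}\) then gives the self-map property in \(B^p_{\text{loc}}\) by the same argument as Step 1 of Theorem \ref{wave-jimenez0}. For the drift piece, when \(p \ge 1\) Minkowski's integral inequality yields
\[
\Big\|\int_0^t\!\!\int_{\mathbb{R}^d}\! G_{t-s}(x-y) f(s,y,\phi(s,y))\, dy\, ds\Big\|_p \le \int_0^t\!\!\int_{\mathbb{R}^d}\! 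G_{t-s}(x-y)\bigl(\|f_0(s,y)\|_p + K_{T,R}\|\phi(s,y)\|_p\bigr)\, dy\, ds,
\]
and \(\int_0^T\!\!\int_{\overline{B_{R+T}(0)}} G_t(y)\, dy\, dt < \infty\) (which follows from \eqref{fund-G}) controls this contribution. In \(d=2\) with \(p<1\) no such Minkowski-type bound is available, which is precisely why the hypothesis \(f \equiv 0\) is imposed in that case. The contraction estimate for \(\tilde{\mathcal{T}}_N(\phi_1) - \tilde{\mathcal{T}}_N(\phi_2)\) is identical in structure, with \(\sigma_0, f_0\) dropping out and \(\phi\) replaced by \(\phi_1 - \phi_2\).

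The resulting one-step inequality can then be iterated as in \eqref{iterative-n-ineq}--\eqref{wave-ccc2} using Lemma \ref{lemma35}, yielding a Cauchy sequence \(\{u_n^{(N)}\}\) in \(B^p_{\text{loc}}\); the additional drift contribution is absorbed into the \(G_{t-s}\mathds{1}_{\{p \ge 1\}}\) summand of \(g_p\), so the asymptotics of \(A_n^{(p)}(t)\) are unchanged. Patching via \(u = u^{(1)}\mathds{1}_{[\![0,\tau_1]\!]} + \sum_{N\ge 2} u^{(N)}\mathds{1}_{(\!(\tau_{N-1},\tau_N]\!]}\) then proceeds exactly as in Step 1 of Theorem \ref{wave-jimenez1} using \(\Lambda = \Lambda_N\) on \([\![0,\tau_N]\!]\), and uniqueness in the required class follows from the uniqueness of each \(u^{(N)}\) together with \(\tau_N \uparrow +\infty\). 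The main obstacle I expect is the drift term when \(p<1\): Minkowski's inequality --- the natural tool for extracting the \(p\)-th moment of a Bochner integral --- fails in that range, which is exactly the reason for the explicit hypothesis \(f \equiv 0\) in the \(d=2\), \(p<1\) case. A secondary technical subtlety is converting the \emph{random} Lipschitz coefficients \(\mathcal{C}_\sigma, \mathcal{C}_f\) into deterministic bounds; the past light-cone support of \(G_t\) is essential here, since it localizes the integration to \(\overline{B_{R+T}(0)}\) where the \(B^\infty_{\text{loc}}\)-regularity can be exploited and the framework of Lemma \ref{lem33chong} applied without modification.
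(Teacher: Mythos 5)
Your proposal is correct and follows essentially the same route as the paper: a self-map and contraction estimate for the truncated Picard operator in $B^p_{\text{loc}}$ (handling the stochastic integral via Lemma \ref{lem33chong} with the random Lipschitz coefficients localized to $\overline{B_{R+T}(0)}$ through their $B^\infty_{\text{loc}}$ bound, and the drift via a $p\ge 1$ integral inequality, with $f\equiv 0$ required when $p<1$), followed by iteration with $A_n^{(p)}(t)$ and patching along $\tau_N$ as in Theorem \ref{wave-jimenez1}. The only cosmetic difference is that you invoke Minkowski's integral inequality for the drift where the paper uses H\"older/Jensen with respect to the measure $G_{t-s}(x-y)\,dy\,ds$; both yield the same one-step inequality.
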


\begin{proof}
 First, we prove that the operator $\Im_N: \cP \to \cP$ given by
\[
\begin{split}
    \Im_N( \phi ) (t,x) :=  w(t,x) & +  \int_0^t \int_{ \mathbb{R}^d } G_{t-s} (x-y) f(s,y, \phi (s,y)) dy ds \\
     &+\int_0^t \int_{ \mathbb{R}^d } G_{t-s} (x-y) \sigma(s,y, \phi (s,y)) \Lambda_N (ds,dy),
\end{split}
\] 
is a self-map in $B_{\text{loc}}^p$, for each fixed $N \in \mathbb{N}$. Note that $\Im_N$ is well-defined since $\sigma(t,x,\phi(t,x))$ and $f(t,x,\phi(t,x))$ are predictable for all $\phi \in \cP$ as a consequence of $\sigma$ and $f$ being measurable with respect to $\tilde{\mathcal{P}} \otimes \mathcal{B}(\mathbb{R})$. Hence, by Lemma 6.2 in \cite{chong2}, $\Im_N( \phi ) (t,x)$ has a predictable modification. Now, by Lemma \ref{lem33chong}, \eqref{supp1}, \eqref{sigma-cond1}, and Hölder's inequality $||X Y ||_1 \le ||X ||_{\infty} || Y ||_1$, for any $t \in [0,T]$ and $|x| \le R$, we have:
\begin{equation}
\label{eq111}
    \begin{split}
        & \mathbb{E} \left[ \Big| \int_0^t \int_{ \mathbb{R}^d } G_{t-s} (x-y) \sigma(s,y, \phi(s,y) ) \Lambda_N (ds,dy) \Big|^p \right] \\
         & \le C_{p,T} \int_0^t \int_{\mathbb{R}^d} g_p(t-s,x-y) \mathbb{E} \left[ | \sigma(s,y, \phi(s,y)) |^p \right] \, h(y)^{p-q} dy ds \\
          \le & C_{p,T,R} \Big( \sup_{s \in [0,T]} \sup_{|y| \le T+R } || \mathcal{C}_{\sigma}(s,y) ||_{\infty}^p \vee \mathbb{E} \left[ | \sigma_0(s,y)|^p \right] \Big) \\
         & \times \Big( \sup_{s \in [0,T]} \sup_{|y| \le T+R } \mathbb{E} \left[ | \phi(s,y) |^p \right] + 1 \Big). \\
    \end{split}
\end{equation}
Hence,
\begin{equation}
    \label{eq112}
    \sup_{t \in [0,T] } \sup_{ |x| \le R } \bE \left[ \Big| \int_0^t \int_{ \mathbb{R}^d } G_{t-s} (x-y) \sigma(s,y, \phi(s,y) ) \Lambda_N (ds,dy) \Big|^p \right] < +\infty .
\end{equation}
Next, we examine the integral that corresponds to the drift \( f \). Recall that if \( p \in (0,1) \), we assume that \( f = 0 \). Hence, we consider only the case \( p \ge 1 \). By Hölder's inequality and \eqref{b-cond1}, for any \( t \in [0,T] \) and \( |x| \le R \),
\begin{equation}
\label{eq113}
\begin{split}
& \mathbb{E} \left[ \Big| \int_0^t \int_{ \mathbb{R}^d } G_{t-s} (x-y) f(s,y, \phi(s,y) ) dy ds \Big|^p \right] \\
& \le \Big( \int_0^t \int_{\mathbb{R}^d} G_{t-s} (x-y) dy ds \Big)^{p-1} \int_0^t \int_{ \mathbb{R}^d } G_{t-s} (x-y) \mathbb{E} \left[ | f(s,y, \phi(s,y)) |^p \right] dy ds \\
& \le C_T \Big( \sup_{s \in [0,T]} \sup_{|y| \le T+R } || \mathcal{C}_{f}(s,y) ||_{\infty}^p \vee \mathbb{E} \left[ | f_0 (s,y) |^p \right] \Big) \Big( \sup_{s \in [0,T]} \sup_{|y| \le T+R } \mathbb{E} \left[ | \phi(s,y) |^p \right] + 1 \Big). \\
\end{split}
\end{equation}
Then,
\[
\sup_{t \in [0,T]} \sup_{|x| \le R } \mathbb{E} \left[ \Big| \int_0^t \int_{ \mathbb{R}^d } G_{t-s} (x-y) f(s,y, \phi(s,y) ) dy ds \Big|^p \right] < +\infty.
\]
Therefore, by \eqref{eq112} and \eqref{eq113},
\begin{equation}
\label{self-maps-fact}
\sup_{t \in [0,T]} \sup_{|x| \le R } \mathbb{E} \left[ | \Im_N (\phi) (t,x) |^p \right]  <+ \infty.
\end{equation}

Now, consider the Picard's iteration sequence \(\{u_n^{(N)} \}_{n \ge 0 }\) given by \(u_0^{(N)} (t,x) := \Psi_0(t,x)\) where \(\Psi_0 \in B^p_{\text{loc}}\), and
\begin{equation}
\label{picard-trunc-quasi}
\begin{split}
u_{n+1}^{(N)}(t,x) := w(t,x) & +  \int_0^t \int_{ \mathbb{R}^d } G_{t-s} (x-y) f(s,y, u_n^{(N)}(s,y)) dy ds \\
&+\int_0^t \int_{ \mathbb{R}^d } G_{t-s} (x-y) \sigma(s,y, u_n^{(N)}(s,y)) \Lambda_N (ds,dy). \\
\end{split}
\end{equation}
By \eqref{self-maps-fact}, it follows that
\[
\sup_{t \in [0,T] } \sup_{ |x| \le R } \mathbb{E} \left[ | u_{n}^{(N)}(t,x) |^p \right] < +\infty,
\]
for all \(R, T > 0\) and \(n \in \mathbb{N}\). Similarly to \eqref{eq112} and \eqref{eq113}, we obtain
\begin{equation}
\label{iterat-ineq}
\begin{split}
    & \mathbb{E} \left[ | u_n^{(N)} (t,x) - u_{n-1}^{(N)} (t,x)  |^p \right]  \\
   & \le C_{T}  \int_0^t \int_{ \mathbb{R}^d }g_p(t-s,x-y) \mathbb{E} \left[|  u_{n-1}^{(N)} (s,y) - u_{n-2}^{(N)} (s,y)   |^p\right]  h(y)^{p-q} ds dy. \\
\end{split}
\end{equation}
Following the same procedure as in the proof of Theorem \ref{wave-jimenez0}, by iterating inequality \eqref{iterat-ineq}, we obtain that:
\[
    \mathbb{E} \left[ | u_n^{(N)} (t,x) - u_{n-1}^{(N)} (t,x)  |^p \right] \le  C_{p,T,R} A_n^{(p)} (t), 
\]
where \(A_n^{(p)} (t)\) is defined as in \eqref{wave-ccc2}. Hence, \(\{ u_n^{(N)} \}_{n \in \mathbb{N} }\) is a Cauchy sequence in \(B_{\text{loc}}^p\). Consequently, there exists a limit \(u^{(N)}\) in \(B_{\text{loc}}^p\). The existence and uniqueness of a solution \(u\) to \eqref{wave-quasi} follow in the same manner as in the proof of Theorem \ref{wave-jimenez1}.

\end{proof}

\section{Past light-cone property}
In this section, we fix \( T > 0 \) and consider the stochastic wave equation in the interval \([0, T]\),
\begin{equation}
    \label{wave-levy-T}
    \begin{cases}
        \dfrac{\partial^2 u}{\partial t^2} (t, x) = \Delta u (t, x) + \sigma(u(t, x)) \dot{\Lambda}(t, x), \quad & t \in (0, T], \, x \in \mathbb{R}^d, \, d \le 2, \\
        u(0, x) = u_0(x), \quad \dfrac{\partial u}{\partial t}(0, x) = v_0(x), \quad & x \in \mathbb{R}^d,
    \end{cases}
\end{equation}
where \( \sigma \) is a globally Lipschitz function, \( u_0 \) and \( v_0 \) satisfy Assumption \ref{ICH}. By exploiting the PLCP of the wave equation, we will show the existence and uniqueness of a solution to \eqref{wave-levy-T} without imposing the assumption of $q$-integrability over the large jumps:
\begin{equation}
\label{large-jumps-sum}
\int_{\{|z| > 1\}} |z|^q \nu (dz) < +\infty.
\end{equation}

The PLCP has been extensively studied for hyperbolic PDEs in the literature. For instance, in \cite{DalangWave}, it was proved that the solution of the stochastic wave equation, in dimension \(d =3\), driven by a colored Gaussian noise remains invariant in a region of space, if the problem is restricted to that region. As in Section 6 of \cite{DalangWave}, for a fixed region $D \in \mathfrak{B}$, we define the conic region
\[
\mathcal{K}^D (s) := \{ y \in \mathbb{R}^d \; ; \; d(y, D) \le T-s \}, \quad \text{for any } s \in [0, T].
\]
Clearly, \(\mathcal{K}^D (0) = \bigcup_{t \in [0, T]} \mathcal{K}^D (t)\).

As we do not impose condition \eqref{large-jumps-sum}, the stopping time \(\tau_N\) given by \eqref{stop-time} may not be well-defined. Therefore, we consider another stopping time given by
\begin{equation}
    \label{stop-time-D}
    \tau_N (D) := \inf \left\{ t \in [0, T] \, ; \, J([0, t] \times \mathcal{K}^D (0) \times \{ |z| > N \}) > 0 \right\}, \quad N \in \mathbb{N}, \; D \in \mathfrak{B}.
\end{equation}
Note that \(\tau_N(D) > 0\) a.s., and \(\tau_N(D) < \tau_{N+1}(D)\) a.s. for all \(N \in \mathbb{N}\). Moreover, \(\tau_N(D) = +\infty\), for large \(N\).

Additionally, for a fixed \((t, x) \in [0, T] \times \overline{D}\), note that \(\text{supp}(H^{(t,x)}) \subset [0, t] \times \mathcal{K}^D (0)\), where
\[
H^{(t,x)}(s,y) = G_{t-s}(x-y) \mathds{1}_{\{t \ge s\}},
\] 
This implies that for a fixed $(t,x) \in [0,T] \times \overline{D}$, the value $u(t,x)$ of the solution of \eqref{mild-SPDEs} only depends on the values of $\Lambda$ on \([0, T] \times \mathcal{K}^D (0)\). Thus, for any $(t,x) \in [0,T] \times \overline{D}$, if $\phi$ is an integrable random field with respect to $\Lambda$ (see \eqref{def-int-levy}),
\begin{equation}
\label{local_D}
\begin{split}
     & \mathds{1}_{[\![  0, \tau_N(D) ]\!]} (t) \int_0^t \int_{\bR^d} G_{t-s} (x-y) \sigma (\phi (s,y) ) \Lambda(ds,dy)\\
     & =   \mathds{1}_{[\![  0, \tau_N(D)]\!]} (t) \int_0^t \int_{\bR^d} G_{t-s} (x-y) \sigma (\phi (s,y) ) \mathds{1}_{[\![  0, \tau_N(D)]\!]} (s)  \overline{\Lambda}_N(ds,dy), \\  
\end{split}
\end{equation}
due to Lemma \ref{local-int}, and the fact that
\[
\Lambda([0,t] \times A) = \overline{\Lambda}_N ([0,t] \times A) \quad \text{on} \quad \{ t \le \tau_N(D)  \},
\] 
for all $A \in \mathcal{B}_b ( \bR^d )$, with $A \subset \mathcal{K}^D (0).$

\medskip

The stopping times defined in \eqref{stop-time-D} are analogous to those used in \cite{balan27, CDH} for solving SPDEs driven by L\'evy noise in bounded domains in \(\mathbb{R}^d\). For example, in \cite{CDH}, it was proved that for any fixed \( T > 0 \) and $D \in \mathfrak{B}$, the stochastic heat equation on \( D \),
\begin{equation}
    \label{heat-bdd}
    \begin{cases}
    \dfrac{\partial u}{\partial t} (t,x) = \dfrac{1}{2} \Delta u (t,x) + \sigma(u(t,x)) \dot{\Lambda} (t,x), \quad &  t \in (0,T], \, x \in D, \\
    u(t,x) = 0, \quad &   t \in [0,T], \, x \in \partial D, \\
    u(0,x) = u_0(x), \quad &  x \in D,
    \end{cases}
\end{equation}
has a unique solution \( u = \{ u(t,x) \; ; \; t \in [0,T], \, x \in D \} \) that satisfies
\[
\sup_{t \in [0,T]} \sup_{ x \in D } \mathbb{E} \left[ \left| u(t,x) \right|^p  \mathds{1}_{ {[\![ 0, \tau^{\star}_N(D)  ]\!]} }(t) \right] < +\infty,
\]
for all \( N \in \mathbb{N} \). Here, \( \tau^{\star}_N(D) \) is the stopping time defined by
\[
\tau^{\star}_N (D) := \inf \left\{ t \in [0,T] : \int_0^t \int_{D} \int_{\{ |z| > N \}} J (ds, dx, dz) > 0 \right\}.
\]
However, the main issue associated with the use of \( \tau^{\star}_N (D) \) for SPDEs on the entire space (such as \eqref{wave-levy} and \eqref{semilinear-heat}) is that if \( D = \mathbb{R}^d \), then \( \tau^{\star}_N (D) = 0 \) almost surely for all \( N \in \mathbb{N} \). This happens because the region \([0,t] \times \mathbb{R}^d \times \{ |z| > N \}\) may contain infinitely many points of \( J \). By contrast, using the PLCP, we can construct a "local solution" \( u^{(D)} \) of \eqref{wave-levy-T} on \([0,T] \times \overline{D}\) using the stopping times given by \eqref{stop-time-D}. As these local solutions are consistent and agree almost surely on the same region in space, we can construct a mild solution to \eqref{wave-levy-T}.

The resemblance between \( \tau_N (D) \) and \( \tau^{\star}_N (D) \) is not a coincidence. The primary result of this section shows that the solution \( u \) to \eqref{wave-levy-T} given by Theorem \ref{wave_D} satisfies \( u^{(D)}(t, x) = u(t, x) \) a.s. for all \( (t, x) \in [0, T] \times \overline{D} \), where \( u^{(D)} \) is a predictable process satisfying the stochastic-integral equation:
\begin{equation}
    \label{wave-bdd}
    u^{(D)}(t, x) = w(t, x) + \int_0^t \int_{\mathbb{R}^d} G_{t-s}(x - y) \sigma(u^{(D)}(s, y)) \mathds{1}_{\mathcal{K}^D (s)}(y) \Lambda(ds, dy).
\end{equation}
Note that the integrand of the stochastic integral on the right-hand side of equation \eqref{wave-bdd} has support on \( \mathcal{K}^D (0) \). Hence, we can solve \eqref{wave-bdd} using \( \tau_N (D) \) similarly to solving SPDEs driven by L\'evy noise in bounded domains.

Furthermore, since the method that we use to construct a solution to \eqref{wave-bdd} essentially requires the same integrability condition on \( \nu \) as for solving SPDEs in bounded domains, it suffices to consider only the assumption on the small jumps \( \int_{\{|z| \le 1\}} |z|^p \nu (dz) < +\infty \) for some \( p >0 \). More precisely, we only need this assumption on the small jumps for the wave equation \eqref{wave-levy-T} in dimension \( d = 2 \), while for dimension \( d = 1 \), no additional conditions are required beyond \eqref{levy-measure0}.

\begin{theorem}
    \label{wave_D}
   (a) If \( d = 1 \), \eqref{wave-levy-T} has a unique  (up to modifications) mild solution \( u \) that satisfies
    \[
    \sup_{t \in [0, T]} \sup_{x \in \overline{D}} \mathbb{E} \left[ |u(t, x) |^p \mathds{1}_{ {[\![ 0, \tau_N (D) ]\!]} } (t) \right] < +\infty, \quad \text{for all \( p \ge 2 \), \( N \in \mathbb{N} \), and \( D \in \mathfrak{B} \)}.
    \]
    (b) For \(d=2\), there exists \(p \in (0,2)\) such that
    \begin{equation}
       \label{levyp}
      \int_{\{|z| \le 1\}} |z|^p \nu (dz) <+ \infty,
    \end{equation}
    and assume that \(b = \int_{\{ |z| \le 1 \}} z \nu(dz)\) if \(p < 1\). Then, \eqref{wave-levy} has a unique (up to modifications) mild solution \(u\) that satisfies
    \[
    \sup_{t \in [0, T]} \sup_{x \in \overline{D}} \mathbb{E} \left[ \left| u(t, x) \right|^p \mathds{1}_{ {[\![ 0, \tau_N (D) ]\!]} } (t) \right] < +\infty, \quad \text{for all } N \in \mathbb{N} \text{ and } D \in \mathfrak{B}.
    \]
\end{theorem}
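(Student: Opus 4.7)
The plan is to exploit the PLCP to reduce \eqref{wave-levy-T} to a family of bounded-domain problems indexed by $D \in \mathfrak{B}$, solve each by truncated Picard iteration on $[0,T] \times \overline{D}$, and glue the resulting local solutions both across the truncation level $N$ and across the domain $D$. Fix $D \in \mathfrak{B}$ and $N \in \mathbb{N}$. For any $(t,x) \in [0,T] \times \overline{D}$ the past light-cone $\mathcal{C}_{t,x}$ lies in $[0,T] \times \mathcal{K}^D(0)$, and $\mathcal{K}^D(0)$ is bounded in $\mathbb{R}^d$. This is the key feature that allows us to apply Lemma \ref{lem33chong} with $\mathcal{G}_t(x) = G_t(x) \mathds{1}_{\mathcal{K}^D(t)}(x)$ and with $q = p$, so that $h(y)^{p-q} \equiv 1$ on the relevant support and the large-jump integrability \eqref{large-jumps-sum} is unnecessary: once we truncate the noise at level $N$ via $\overline{\Lambda}_N$, the bound $\int_{\{1 < |z| \le N\}} |z|^p \nu(dz) < +\infty$ is automatic.

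The first main step is to construct the solution of the truncated, localized equation
\[
u^{(N,D)}(t,x) = w(t,x) + \int_0^t \int_{\mathbb{R}^d} G_{t-s}(x-y) \sigma(u^{(N,D)}(s,y)) \mathds{1}_{\mathcal{K}^D(s)}(y) \overline{\Lambda}_N(ds,dy)
\]
by Picard iteration on the local space $\tilde{B}^p(D) := \{ \phi \in \mathcal{P} : \sup_{t \in [0,T]} \sup_{x \in \mathcal{K}^D(0)} ||\phi(t,x)||_p < +\infty \}$, starting from $\Psi_0 := w$. Since every spatial integral is restricted to the bounded set $\mathcal{K}^D(0)$, the self-map property, the iterated estimate and the Cauchy argument of Theorem \ref{wave-jimenez0} transfer verbatim; the summability \eqref{sup_AA} then produces a unique fixed point $u^{(N,D)} \in \tilde{B}^p(D)$. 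For $d=1$, Lemma \ref{lem33chong} applies for every $p \ge 2$ thanks to Remark \ref{d1-small}, yielding part (a); for $d=2$, we take the exponent $p \in (0,2)$ provided by \eqref{levyp}, yielding part (b).

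The second main step is the double gluing. Using \eqref{local_D}, the stopping times $\tau_N(D)$, and the consistency of $\{u^{(N,D)}\}_N$ on $\{t \le \tau_N(D)\}$, I set
\[
u^{(D)}(t,x) := u^{(1,D)}(t,x) \mathds{1}_{[\![ 0,\tau_1(D) ]\!]}(t) + \sum_{N \ge 2} u^{(N,D)}(t,x) \mathds{1}_{(\!( \tau_{N-1}(D), \tau_N(D) ]\!]}(t),
\]
which solves \eqref{wave-bdd} on $[0,T] \times \overline{D}$ as in Theorem \ref{wave-jimenez1}. To produce a global mild solution $u$ of \eqref{wave-levy-T}, I pick, for each $x \in \mathbb{R}^d$, any $D \in \mathfrak{B}$ with $x \in \overline{D}$ and set $u(t,x) := u^{(D)}(t,x)$. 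Well-definedness rests on the PLCP: if $D_1, D_2 \in \mathfrak{B}$ both contain $x$, then on $[0,T] \times \overline{D_1 \cap D_2}$ the support of $G_{t-s}(x-\cdot)$ is contained in $\mathcal{K}^{D_1}(s) \cap \mathcal{K}^{D_2}(s)$, so the indicators $\mathds{1}_{\mathcal{K}^{D_i}(s)}$ are redundant and $u^{(D_1)}, u^{(D_2)}$ satisfy the same integral equation there; local uniqueness forces $u^{(D_1)} = u^{(D_2)}$ a.s. on $[0,T] \times \overline{D_1 \cap D_2}$.

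Uniqueness of the global $u$ follows by applying the iteration of Lemma \ref{lem33chong} to $u_n^{(N,D)} - v$ on $[\![ 0, \tau_N(D) ]\!]$: the bound $C_{T,D}^n A_n^{(p)}(t) \to 0$ coming from \eqref{A_1_SUP}--\eqref{A_22_SUP} yields $v \mathds{1}_{[\![ 0,\tau_N(D) ]\!]} = u^{(D)} \mathds{1}_{[\![ 0,\tau_N(D) ]\!]}$ a.s., and sending $N \to +\infty$ gives $u = v$ a.s. The main obstacle I expect is the rigorous verification of the localization identity \eqref{local_D} — specifically, that $\Lambda$ and $\overline{\Lambda}_N$ agree on $[\![ 0,\tau_N(D) ]\!] \times \mathcal{K}^D(0)$ as integrators against predictable integrands supported in the past light-cone, and that this equality is preserved by the Daniell-mean framework of Appendix A. Once this localization is firmly in place, every step above reduces to a bounded-domain estimate already controlled by Lemma \ref{lem33chong}.
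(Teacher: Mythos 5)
Your overall strategy coincides with the paper's: localize via the past light-cone, truncate the large jumps with $\overline{\Lambda}_N$ on the bounded cone $\mathcal{K}^D(0)$ so that Lemma \ref{lem33chong} applies with $h \equiv 1$ and no integrability condition on the large jumps, run the Picard iteration to obtain $u^{(D,N)}$, glue across $N$ with the stopping times $\tau_N(D)$, glue across domains, and prove uniqueness by iterating the moment inequality against the candidate $v$ on the stochastic interval up to $\tau_N$ and letting $N \to +\infty$. All of this matches Steps 1, 3 and 4 of the paper's proof, and your observation that taking $h\equiv 1$ on a bounded cone makes the large-jump condition automatic is exactly the point exploited there.

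The one place where your argument is genuinely looser than the paper's is the consistency across domains. You claim that if $D_1, D_2 \in \mathfrak{B}$ both contain $x$, then $u^{(D_1)} = u^{(D_2)}$ a.s.\ on $[0,T] \times \overline{D_1 \cap D_2}$ ``by local uniqueness.'' There are two problems. First, $\overline{D_1 \cap D_2}$ need not contain $x$ (the intersection may even be empty), so as stated the gluing does not cover all points; the paper avoids this by proving consistency only for nested domains $A \subset B$ and exhausting $\mathbb{R}^d$ by the increasing sequence $D_k = (-k,k)^d$. Second, and more substantively, invoking the uniqueness statement for the smaller domain $D' = D_1 \cap D_2$ requires the competitor to lie in $B^p_{\text{loc}}(\tau_N(D'))$ for every $N$; but enlarging the domain shrinks the stopping time (since $\mathcal{K}^{D'}(0) \subset \mathcal{K}^{D_i}(0)$ implies $\tau_N(D_i) \le \tau_N(D')$), so membership in $B^p_{\text{loc}}(\tau_N(D_i))$ for all $N$ does not give membership in $B^p_{\text{loc}}(\tau_N(D'))$ for all $N$. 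The paper's Step 2 sidesteps this by not invoking uniqueness at all: for $A \subset B$ it compares the two Picard sequences $u_n^{(A,N)}$ and $u_n^{(B,N)}$ directly on the common interval $[\![ 0, \tau_N(B) ]\!] = [\![ 0, \tau_N(A) \wedge \tau_N(B) ]\!]$, iterates the inequality along the reversed simplex $\mathcal{A}_n^{(t,x)}$ using the inclusion $B_{s_{i-1}-s_i}(y_{i-1}) \subset \mathcal{K}^A(s_i) \subset \mathcal{K}^B(s_i)$, and concludes from $C_T^n A_n^{(p)}(t) \to 0$. Replace your gluing step by this direct Picard comparison for nested domains; with that repair, the rest of your proposal goes through.
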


\begin{proof} 
As in the previous section, the main distinction between \(d=1\) and \(d=2\) lies in the integrability properties of \(G_t\). If \(d=1\), \(\int_{0}^T \int_{\mathbb{R}^d} G_t^p (x) \, dt \, dx < +\infty\) for all \(p > 0\), and \(\int_{\{|z| \le 1\}} |z|^p \nu(dz) <+ \infty\) for all \(p \ge 2\) by Remark \ref{d1-small}. In contrast, if \(d=2\), \(\int_{0}^T \int_{\mathbb{R}^d} G_t^p (x) \, dt \, dx <+ \infty\) holds only for \(p \in (0, 2)\), which requires imposing the \(p\)-integrability of \(\nu\) on the small jumps.

\textit{Step 1 (Existence and uniqueness of a local solution).} Let \( D \in \mathfrak{B} \). By employing a similar approach as in the proofs of Theorem \ref{wave-jimenez0} and Theorem \ref{wave-jimenez1}, we can establish the existence of a unique solution \( u^{(D)} \) to \eqref{wave-bdd} that satisfies
\[
\sup_{t \in [0, T]} \sup_{|x| \le R} \mathbb{E} \left[ \left| u^{(D)}(t, x) \right|^p \mathds{1}_{ {[\![ 0, \tau_N(D)  ]\!]} }(t) \right] < +\infty, \quad \text{for all } N \in \mathbb{N} \text{ and } R > 0.
\]
Additionally,
\[
u^{(D)}(t, x) = u^{(D, N)}(t, x), \quad \text{on} \quad \{ t \le \tau_N (D) \},
\]
where \( u^{(D, N)}(t, x) \), up to modifications, is the unique solution to the truncated problem, 
\begin{equation}
    \label{wave-bdd-trunc}
    u^{(D,N)}(t,x) = w(t,x) + \int_0^t \int_{\mathbb{R}^d} G_{t-s} (x - y) \sigma(u^{(D,N)}(t,x))  \mathds{1}_{ \mathcal{K}^D (s)} (y) \overline{\Lambda}_N(ds,dy),
\end{equation}
that satisfies
\begin{equation}
\label{Bploc-bdd}
\sup_{t \in [0, T]} \sup_{|x| \le R} \mathbb{E} \left[  \left| u^{(D,N)}(t, x) \right|^p \right]  < +\infty, \quad \text{for all} \; R > 0.
\end{equation}
Here, $\overline{\Lambda}_N $ is the noise $\Lambda_N$ in \eqref{levy-noise0} when $h(x)=1$, i.e.,
\begin{equation}
    \label{truncate-classic}
         \overline{\Lambda}_N (B) =  b |B |+  \int_{ B \times \{ |z| \le 1 \} } z  \tilde{J}(dt,dx,dz) + \int_{ B \times \{ 1 < |z| \le N   \}  } z  J(dt,dx,dz).
\end{equation}
Notice that \( u^{(D,N)}(t,x) \) is the limit in $B^p_{\text{loc}}$ of the Picard iteration sequence \( \{ u_n^{(D,N)} \}_{n \ge 0} \), given by \( u_0^{(D,N)} = \Psi_0^{(D)} \), with \( \Psi_0^{(D)} \) being an arbitrary element of \( B^p_{\text{loc}} \), and 
\begin{equation}
\label{picard-bdd}
 u_n^{(D,N)} (t,x) = w(t, x) + \int_0^t \int_{\mathbb{R}^d} G_{t-s}(x-y) \sigma(u_{n-1}^{(D, N)}(s, y)) \mathds{1}_{ \mathcal{K}^D (s)}(y) \overline{\Lambda}_N(ds, dy), 
\end{equation}
for $n \in \mathbb{N}.$

The primary distinction from the proofs of Theorem \ref{wave-jimenez0} and Theorem \ref{wave-jimenez1} is that we can apply Lemma \ref{lem33chong} for \( \overline{\Lambda}_N \) ( $\Lambda_N$ when \( h(x)=1 \)), without condition \eqref{large-jumps-sum}, i.e., considering only condition \eqref{levyp}; this same observation can be found on page 477 of \cite{balan49}. This enables us to demonstrate that
\begin{equation}
\label{conv-D-trunc}
\sup_{t \in [0,T]} \sup_{ |x| \le R } \mathbb{E} \left[ \left| u_n^{(D,N)}(t,x) - u^{(D,N)}(t,x) \right|^p \right]  \to 0, \quad \text{for all  $R>0,$}
\end{equation}
as \( n \to + \infty \), without \eqref{large-jumps-sum}. The uniqueness of the solution $u^{(D)}$ can be proven using the same arguments of the proof of Theorem \ref{wave-jimenez1}.

The proof for constructing local-truncated solutions \( u^{(D,N)} \) for \( D \in \mathfrak{B} \) is essentially the same as in Theorem 2.8 of \cite{balan49}; the only differences are that the initial condition \(w\) satisfies \eqref{ICH-eq} and we used the compact support property \eqref{compact_supp} to obtain the self-map property of the truncated operator related to the fixed-point problem \eqref{wave-bdd-trunc}. This implies that we can obtain a unique (up to modifications) mild solution to \eqref{wave-bdd-trunc} that satisfies \eqref{Bploc-bdd}.

\textit{Step 2 (Consistency).} For any \( A, B \in \mathfrak{B} \) with \( A \subset B \), we will show:
\begin{equation}
    \label{contain1}
    u^{(A)}(t,x) = u^{(B)}(t,x) \quad \text{a.s.,} \quad \text{for all } (t,x) \in [0,T] \times \overline{A},
\end{equation}   
where \( u^{(A)} \) (resp. \( u^{(B)} \)) is the solution of \eqref{wave-bdd} found in Step 1 with \( D = A \) (resp. \( D = B \)).

Our goal is to demonstrate that
\begin{equation}
    \label{goal1}
   \sup_{(t,x) \in [0, T] \times \overline{A} } \mathbb{E} \left[ | ( u^{(A)}(t,x) - u^{(B)}(t,x)) \mathds{1}_{[\![ 0, \tau_N (B) \wedge \tau_N (A) ]\!]}(t) |^p \right] = 0 \quad \text{for all } N \in \mathbb{N}.
\end{equation}
If \eqref{goal1} holds, then,
\[
  \mathds{1}_{[\![ 0, \tau_N (B) \wedge \tau_N (A) ]\!]} (t) u^{(A)} (t,x) = \mathds{1}_{[\![ 0, \tau_N (B) \wedge \tau_N (A) ]\!]} (t) u^{(B)} (t,x) \quad \text{a.s.} \quad \text{for all } (t,x) \in [0, T] \times \overline{A}.
\]
Letting \(N\) be sufficiently large, we obtain \eqref{contain1}. Now, for any \( (t,x) \in [0, T] \times \overline{A} \), by the triangle inequality, we have:
\begin{equation}
\label{tri1}
\begin{split}
   &  \mathbb{E} \left[ |(u^{(A)}(t,x) - u^{(B)}(t,x)) \mathds{1}_{[\![0, \tau_N(B) \wedge \tau_N(A)]\!]}(t)|^p \right] \\ 
    & \le c_p \Big[\mathbb{E} \left[ |(u^{(A,N)}(t,x) - u_n^{(A,N)}(t,x)) \mathds{1}_{[\![0, \tau_N(B)]\!]}(t)|^p \right] \\
    &+ \mathbb{E} \left[ |(u_n^{(A,N)}(t,x) - u_n^{(B,N)}(t,x)) \mathds{1}_{[\![0, \tau_N(B)]\!]}(t)|^p \right]  \\
   &  + \mathbb{E} \left[ |(u_n^{(B,N)}(t,x) - u^{(B,N)}(t,x)) \mathds{1}_{[\![0, \tau_N(B)]\!]}(t)|^p \right] \Big], \\
\end{split}
\end{equation}
where $u_n^{(A,N)}$ (resp. \( u_n^{(B,N)} \)) is the sequence defined in \eqref{picard-bdd} when $D=A$ (resp.  $D=B$). In \eqref{tri1}, we used the fact that \(\tau_N (B) \le \tau_N(A)\) a.s. for all \(N \in \mathbb{N}\). By \eqref{conv-D-trunc}, the first and third terms on the right-hand side of \eqref{tri1} converge to zero. Therefore, to prove \eqref{goal1}, it remains to show that:
\begin{equation}
    \label{goal2}
    \sup_{(t,x) \in [0, T] \times \overline{A}} \mathbb{E} \left[ |(u_n^{(A,N)}(t,x) - u_n^{(B,N)}(t,x)) \mathds{1}_{[\![0, \tau_N(B)]\!]}(t)|^p \right] \to 0 \quad \text{as } n \to + \infty.
\end{equation}

For a fixed  $(t,x) \in [0,T] \times \overline{A}$ and fixed $n \in \mathbb{N}$, we define the set $\mathcal{A}_n^{(t,x)}$ as
\[
\begin{split}
    \mathcal{A}_n^{(t,x)} := \Big\{ (t,x,s_1,y_1, & \ldots, s_{n}, y_{n})  \in  ([0,T] \times \mathbb{R}^d)^{n+1} ; \\
    &\, t>s_1 > s_2 > \ldots > s_n >0, \, y_{k} \in B_{ s_{k-1} - s_{k} } (y_{k-1}), \, k=1, \ldots,n \Big\},\\
\end{split}
\]
where, $(s_0,y_0) = (t,x)$. In contrast with the previous section, we use the reverse order index for the simplex defined in Lemma \ref{lemma35}, i.e., \( t > s_1 > s_2 > \ldots > s_n > 0 \). We denote
\[
\mathbf{s}:=(s_1, s_2, \ldots, s_n), \quad \text{and} \quad \mathbf{y}:= (y_1, y_2, \ldots, y_n).
\]
Additionally, note that
\begin{equation}
\label{A-reverse}
\int_{\tilde{T}_n(t)} \int_{(\mathbb{R}^d)^n}
 \prod_{i=1}^n g_p(s_{i-1} - s_{i}, y_{i-1} - y_{i}) \, d \mathbf{y} \, d \mathbf{s} = A_n^{(p)}(t),    
\end{equation}
where \( \tilde{T}_n(t) := \{ (s_1, \ldots, s_n) \in (0,t)^n \, ; \, s_n < \ldots < s_1 \} \), and $A_n^{(p)}(t)$ is defined as in \eqref{sup_AA}.

By the triangle inequality, note that
\begin{equation}
\label{chain-ball}
B_{s_{i-1}-s_i} (y_{i-1}) \subset   \mathcal{K}^{A} (s_i) \subset \mathcal{K}^{B} (s_i), \quad \text{for all $i=1,\ldots,n$}.
\end{equation}
for any $ (t,x,s_1,y_1, \ldots, s_{n}, y_{n}) \in \mathcal{A}_n^{(t,x)}$. Due to \eqref{chain-ball}, we have:
\begin{equation}
\label{ball-picardA}
\begin{split}
     u^{(A,N)}_{n-k+1} (s_{k-1},y_{k-1})
     & =  w(s_{k-1},y_{k-1}) \\
     & + \int_0^{s_{k-1}} \int_{\bR^d} G_{s_{k-1}-s_{k}} (y_{k-1}-y_{k}) \sigma( u^{(A,N)}_{n-k} (s_{k},y_{k}) )  \overline{\Lambda}_N (ds_k,dy_k) \\
\end{split}
\end{equation}
and
\begin{equation}
\label{ball-picardB}
\begin{split}
     u^{(B,N)}_{n-k+1} (s_{k-1},y_{k-1})
     & = w(s_{k-1},y_{k-1}) \\
      & + \int_0^{s_{k-1}} \int_{\bR^d}  G_{s_{k-1}-s_{k}}(y_{k-1}-y_{k}) \sigma( u^{(B,N)}_{n-k} (s_k,y_k) ) \overline{\Lambda}_N (ds_k,dy_k). \\
\end{split}
\end{equation}
for $k=1,\ldots,n.$

Let
\[
Y_{n,k}^N (s_{k},y_{k}) := (u^{(A,N)}_{n-k} (s_{k},y_{k}) -   u^{(B,N)}_{n-k} (s_{k},y_{k}) ) \mathds{1}_{[\![  0, \tau_N (B)   ]\!]} (s_{k} ), 
\]
for all $k=0,1,\ldots,n.$ Then, using \eqref{ball-picardA}, \eqref{ball-picardB}, Proposition \ref{local-int}, and Lemma \ref{lem33chong}, we have:
\begin{equation}
   \label{2_it}
    \begin{split}
       & \mathbb{E} \left[ |Y_{n,k-1}^N (s_{k-1}, y_{k-1}) |^p \right] \\
       & \le C_T \int_0^t \int_{\mathbb{R}^d} g_p (s_{k-1} - s_{k}, y_{k-1} - y_{k}) \mathbb{E} \left[ |Y_{n,k}^N (s_{k}, y_{k}) |^p \right]  \, dy_k \, ds_k, \\
    \end{split}
\end{equation}
for \( k = 1, \ldots, n \). Observe that the constant \(C_T\) in \eqref{2_it} differs from the constant in \eqref{bdgl2}. However, this is not an issue as this constant depends only on \(N\), \(T\), \(\sigma\), and \(p\).

Now, by \eqref{2_it}, we can iterate the following inequality \( n - 1 \)-times:
\begin{equation}
\label{1_it}
    \begin{split}
       & \mathbb{E} \left[ |Y_{n,0}^N (t,x)|^p \right]  \le C_T \int_0^t \int_{\mathbb{R}^d} g_p(t-s_1, x-y_1) \mathbb{E} \left[ |Y_{n,1}^N (s_1,y_1) |^p \right]  \, dy_1 \, ds_1,
    \end{split}
\end{equation}
over \( (t,x,s_1,y_1,\ldots,s_n,y_n) \in \mathcal{A}^{(t,x)} \). Thus,
\begin{equation}
\label{iterati1}
  \mathbb{E} \left[ |Y_{n,0}^N (t,x)|^p \right] \le  C_T^n \int_{\tilde{T}_n(t) } \int_{(\mathbb{R}^d)^n}
\prod_{i=1}^n  g_p( s_{i-1} - s_{i}, y_{i-1} - y_{i} )   \bE \left[ |Y_{n,n}^N (s_n,y_n) |^p \right]  d \textbf{y} d \textbf{s}.
\end{equation}
Therefore, by \eqref{iterati1}, \eqref{A-reverse}, \eqref{A_1_SUP} and \eqref{A_22_SUP},
\[
\begin{split}
    & \bE \left[ |    u^{(A,N)}_n (t,x) -   u^{(B,N)}_n (t,x) \mathds{1}_{[\![  0, \tau_N (B)   ]\!]} (t) |^p \right] \\
    & \le    C_T^n \int_{ \tilde{T}_n(t) } \int_{(\mathbb{R}^d)^n}
\prod_{i=1}^n  g_p( s_{i-1} - s_{i}, y_{i-1} - y_{i} )   \bE \left[ |u_{0}^{(A,N)} (t_n,x_n) - u_{0}^{(B,N)} (s_n,y_n) |^p \right]  d \textbf{y} d \textbf{s} \\ 
  & \le \sup_{s \in [0,T]} \sup_{|y| \le R_A +T}  \bE \left[ |u_{0}^{(A,N)} (s,y) - u_{0}^{(B,N)} (s,y) |^p  \right] \ \sup_{t \in [0,T]}  C_T^n  A_n^{(p)} (t)  \to 0, \quad \text{as $n \to + \infty$,} \\ 
\end{split}
\]
where \( R_A := \sup_{ x \in \overline{A}} |x| \).

\textit{Step 3 (Global solution).}
Let \( u = \{ u(t,x) \, ; \, t \ge 0, x \in \mathbb{R}^d \} \) be the random field defined by
\begin{equation}
    \label{global}
    u(t,x) := u^{(D_k)} (t,x), \quad \text{if} \quad (t,x) \in [0,T] \times [-k,k]^d,
\end{equation}
where \( D_k = (-k,k)^d \) for all \( k \in \mathbb{N} \). Here, \( u^{(D_k)} (t,x) \) is the solution to \eqref{wave-bdd} when \( D = D_k \). Note that \( u \) is well-defined due to \eqref{contain1}. Furthermore, \( u \) defined in \eqref{global} is a solution to \eqref{wave-levy-T}. To demonstrate this, let us fix \( (t,x) \in [0,T] \times \mathbb{R}^d \), and define \( V_{T,x} = \{ y \in \mathbb{R}^d ; |y| < T + |x| \} \). Then, we have:
\[
\begin{split}
    &w(t,x) + \int_0^t \int_{\mathbb{R}^d} G_{t-s} (x-y) \sigma(u(s,y)) \Lambda(ds,dy) \\
    &= w(t,x) + \int_0^t \int_{\mathbb{R}^d} G_{t-s} (x-y) \sigma(u^{(V_{T,x})}(s,y)) \mathds{1}_{\mathcal{K}^{V_{T,x}}(s)}(y) \Lambda(ds,dy) \\
    &= u^{(V_{T,x})} (t,x) = u(t,x) \quad \text{a.s.}
\end{split}
\]
This shows that \( u \) is indeed a solution to \eqref{wave-levy-T}.

\textit{Step 4 (Uniqueness of the solution).}
We assume that $v$ is a mild solution of \eqref{wave-levy-T} that satisfies
\begin{equation}
\label{unii}
\sup_{t \in [0,T]} \sup_{x \in \overline{D}} \bE \left[ |v(t,x)|^p \mathds{1}_{ {[\![ 0, \tau_N (D) ]\!]} }(t) \right]  < + \infty,    
\end{equation}
for all $N \in \bN$ and $D \in \mathfrak{B}$, where $\tau_N (D)$ is given by \eqref{stop-time}. We will show that for any $D$:
\[
u(t,x) = v(t,x) \quad \text{a.s.}  \quad \text{for all $t \in [0,T]$ and $x \in \overline{D}$,}
\]
where $u$ is the solution to \eqref{wave-levy-T} on the previous step. 

First, we define
\[
V_{T,D}:= \{ y \in \bR^d ; |y| < T + R_D \}, \quad \text{where $R_D = \sup_{ x \in \overline{D}} |x|$}.
\]
Note that for all $(t,x) \in [0,T] \times \overline{D}$, we have:
\[
u(t,x) = u^{(V_{T,D})} (t,x) \quad \text{a.s.}
\]
Also, recall that \( u^{(V_{T,D})}(t,x)=u^{(V_{T,D},N)}(t,x) \) a.s. on the event \( \{t\leq \tau_N(V_{T,D})\}\) (see Step 1). Hence, by the triangle inequality,
\[
\begin{split}
& \sup_{t \in [0,T]} \sup_{x \in \overline{D}} \bE \left[ |(u(t,x)-v(t,x)) \mathds{1}_{ {[\![ 0, \tau_N (V_{T,D}) ]\!]} }(t) |^p  \right] \\
& \le 2^{p-1} \Bigg( \sup_{t \in [0,T]} \sup_{x \in \overline{D}} \bE \left[ |(u^{(V_{T,D},N)}(t,x)-u_n^{(V_{T,D},N)}(t,x)) \mathds{1}_{ {[\![ 0, \tau_N (V_{T,D}) ]\!]} }(t) |^p \right] \\
& + \sup_{t \in [0,T]} \sup_{x \in \overline{D}} \bE \left[ |(u_n^{(V_{T,D},N)}(t,x) - v(t,x)) \mathds{1}_{ {[\![ 0, \tau_N (V_{T,D}) ]\!]} }(t) |^p \right] \Bigg),\\
\end{split}
\]
where $u_n^{(V_{T,D},N)}$ is the sequence in \eqref{picard-bdd} with respect to $V_{T,D}$. The first term on the right hand side of the inequality above converges to 0 by \eqref{conv-D-trunc}. It remains to show that:
\[
\sup_{t \in [0,T]} \sup_{x \in \overline{D}} \bE \left[ |(u_n^{(V_{T,D},N)}(t,x) - v(t,x)) \mathds{1}_{ {[\![ 0, \tau_N (V_{T,D}) ]\!]} }(t) |^p \right] \to 0 \quad \text{as $n \to + \infty$,}
\]
for a fixed $N \in \bN$.

Let $(t, x, s_1, y_1, \ldots, s_n, y_n) \in \mathcal{A}^{(t,x)}$, using the same argument as in \eqref{chain-ball}, we get:
\begin{equation}
\label{chain-ball1}
B_{s_{i-1}-s_i} (y_{i-1}) \subset   \mathcal{K}^{D} (s_i) \subset \mathcal{K}^{V_{T,D}} (s_i), \quad \text{for all $i=1,\ldots,n$}.
\end{equation}
Then, by \eqref{chain-ball1} and \eqref{local_D}, we have:
\begin{equation}
\label{it_3}   
\begin{split}
     & \mathds{1}_{ {[\![  0, \tau_N (V_{T,D}) ]\!]} } (s_{k-1})  \int_0^{s_{k-1}} \int_{\bR^d} G_{s_{k-1} -s_k}(y_{k-1}-y_k ) \sigma ( v (s_k,y_k) )  \Lambda (ds_k,dy_k)    \\
     = & \mathds{1}_{ {[\![  0, \tau_N (V_{T,D}) ]\!]} }  (s_{k-1})  \\
    &  \times  \int_0^{s_{k-1}} \int_{\bR^d} G_{s_{k-1} -s_k}(y_{k-1}-y_k ) \sigma ( v (s_k,y_k) )  \mathds{1}_{ {[\![  0, \tau_N (V_{T,D}) ]\!]} } (s_k)  \overline{\Lambda}_N (ds_k,dy_k), \\
\end{split}
\end{equation}
for \( k = 1, \ldots, n. \) 

Now, let us define,
\[
W^N_{n,k} (s_{k}, y_{k}) := (u^{(V_{T,D}, N)}_{n-k} (s_{k}, y_{k}) - v(s_{k}, y_{k})) \mathds{1}_{[\![ 0, \tau_N (V_{T,D}) ]\!]} (s_{k}),
\]
for \( k = 1, \ldots, n. \) Using \eqref{chain-ball}, \eqref{it_3}, \eqref{local_D}, and Lemma \ref{lem33chong}, we obtain:

\begin{equation}
   \label{5_it}
    \begin{split}
       & \bE \left[ |W_{n,k-1}^N (s_{k-1},y_{k-1}) |^p \right] \\
       & \le  C_T \int_0^t \int_{\mathbb{R}^d} g_p (s_{k-1}-s_{k}, y_{k-1}-y_{k}) \mathbb{E} \left[ |W_{n,k}^N (s_{k},y_{k}) |^p \right] \, dy_k \, ds_k, \\
    \end{split}
\end{equation}
for $k=1, \ldots, n.$ Applying the same reasoning as in Step 2, we get:
\begin{equation}
\label{iterati22}
  \mathbb{E} \left[ |W_{n,0}^N (t,x)|^p \right] \le  C_T^n \int_{\tilde{T}_n(t) } \int_{(\mathbb{R}^d)^n}
\prod_{i=1}^n  g_p( s_{i-1} - s_{i}, y_{i-1} - y_{i} )   \bE \left[ |W_{n,n}^N (s_n,y_n) |^p \right] d \textbf{y} d \textbf{s}.
\end{equation}
Therefore, by \eqref{unii}, \eqref{A-reverse}, \eqref{A_1_SUP} and \eqref{A_22_SUP},
\[
\begin{split}
&  \sup_{t \in [0,T]} \sup_{x \in \overline{D}} \bE \left[ |(u_n^{(V_{T,D},N)}(t,x) - v(t,x)) \mathds{1}_{ {[\![ 0, \tau_N (V_{T,D}) ]\!]} }(t) |^p \right] \\
& \le \sup_{s \in [0,T]} \sup_{|y| \le R_D + T} \bE \left[ |(u_{0}^{(V_{T,D},N)}(s,y) - v(s,y)) \mathds{1}_{ {[\![ 0, \tau_N (V_{T,D}) ]\!]} }(s) |^p \right] \sup_{t \in [0,T]} C_T^n A_n^{(p)}(t) \to 0,
\end{split}
\]
as $n \to + \infty$. Finally, we conclude that
\[
u(t,x)  \mathds{1}_{ {[\![ 0, \tau_N (V_{T,D}) ]\!]} }(t) = \mathds{1}_{ {[\![ 0, \tau_N (V_{T,D}) ]\!]} }(t) v(t,x) \quad \text{a.s.}, \quad \text{for all $(t,x) \in [0,T] \times \overline{D}$}.
\]
For sufficiently large \( N \in \mathbb{N} \), we have \(\tau_N (V_{T,D}) = +\infty\) a.s. Consequently, \(u(t,x) = v(t,x)\) a.s., for any \((t,x) \in [0,T] \times \overline{D}\).
\end{proof}

The following remark is derived directly from the proof of Theorem \ref{wave_D}.
\begin{remark}
Let \( u \) be the solution to \eqref{wave-levy-T} given by Theorem \ref{wave_D}. Then,
\[
u(t,x) = u^{(D)} (t,x) \quad \text{a.s.,} \quad \text{for all } (t,x) \in [0,T] \times \overline{D},
\]
where \( u^{(D)} \) is the solution to \eqref{wave-bdd} constructed in Step 1 of Theorem \ref{wave_D}.
\end{remark}

A natural inquiry at this point is the relationship between the solution obtained in Theorem \ref{wave_D} for a fixed time interval $[0, T]$, and the solution to \eqref{wave-levy} derived in Theorem \ref{wave-jimenez1}, under identical initial conditions and same function \( \sigma \). The following theorem confirms that both solutions are almost surely identical for all \( (t,x) \in [0, T] \times \mathbb{R}^d \).

\begin{theorem}
\label{time-comp}
Under the same assumptions as in Theorem \ref{wave_D}, if \( v \) is a mild solution to \eqref{wave-levy-T} that satisfies
\[
\sup_{t \in [0,T]} \sup_{|x| \le R} \mathbb{E} \left[ \left| v(t,x) \right|^p \mathds{1}_{ {[\![ 0, \tilde{\tau}_N  ]\!]} }(t) \right]  < +\infty, \quad \text{for all } N \in \mathbb{N} \text{ and } R > 0,
\]
where \(p\) is the same exponent as in Theorem \ref{wave_D}, \(\tilde{\tau}_N\) is a non-decreasing sequence of stopping times such that \(\tilde{\tau}_N \uparrow +\infty \) a.s. for \( N \to +\infty \), then \( v(t,x) = u(t,x) \) a.s. for all \( (t,x) \in [0,T] \times \mathbb{R}^d \), where \( u \) is the solution to \eqref{wave-levy-T} given by Theorem \ref{wave_D}.
\end{theorem}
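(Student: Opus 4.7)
The plan is to reduce the claim to a local comparison on balls and then run the Picard--iteration argument from Step 4 of Theorem \ref{wave_D} against a combined stopping time that simultaneously dominates both $\tilde{\tau}_N$ and the canonical $\tau_M(\cdot)$. Fix $R>0$, let $D=B_R(0)\in\mathfrak{B}$, and write $V:=V_{T,D}=B_{T+R}(0)$. By the remark following Theorem \ref{wave_D}, $u(t,x)=u^{(V)}(t,x)$ a.s.\ for all $(t,x)\in[0,T]\times\overline{D}$, so it suffices to show $v(t,x)=u^{(V)}(t,x)$ a.s.\ on $[0,T]\times\overline{D}$ and then let $R\to+\infty$.

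Introduce the combined stopping time $\theta_{N,M}:=\tilde{\tau}_N\wedge\tau_M(V)$. Since $\theta_{N,M}\le\tilde{\tau}_N$, the hypothesis on $v$ yields
\begin{equation*}
\sup_{t\in[0,T]}\sup_{|x|\le T+R}\mathbb{E}\left[|v(t,x)|^p\mathds{1}_{[\![0,\theta_{N,M}]\!]}(t)\right]<+\infty,
\end{equation*}
while $\theta_{N,M}\le\tau_M(V)$ makes the PLCP identity \eqref{local_D} available: for every $(t,x)\in[0,T]\times\overline{D}$, the stochastic integrals appearing in the mild formulations of $v$ and of $u^{(V)}$, when multiplied by $\mathds{1}_{[\![0,\theta_{N,M}]\!]}(t)$, can be rewritten against the truncated noise $\overline{\Lambda}_M$ with integrands supported inside $\mathcal{K}^V(s)$.

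The iteration is then parallel to Step 4 of Theorem \ref{wave_D}. Let $\{u_n^{(V,M)}\}_{n\ge0}$ denote the Picard sequence from \eqref{picard-bdd} with $D=V$, and set
\begin{equation*}
W_{n,k}^{N,M}(s_k,y_k):=\bigl(u_{n-k}^{(V,M)}(s_k,y_k)-v(s_k,y_k)\bigr)\mathds{1}_{[\![0,\theta_{N,M}]\!]}(s_k),\qquad 0\le k\le n.
\end{equation*}
By \eqref{local_D}, Lemma \ref{local-int}, and Lemma \ref{lem33chong}, one obtains for $1\le k\le n$ an inequality of the form \eqref{5_it}; iterating on the simplex $\mathcal{A}_n^{(t,x)}$ exactly as in the derivation of \eqref{iterati22} produces the bound
\begin{equation*}
\sup_{t\in[0,T]}\sup_{x\in\overline{D}}\mathbb{E}\left[|W_{n,0}^{N,M}(t,x)|^p\right]\le C_T^{\,n}\,A_n^{(p)}(T)\,K_{N,M},
\end{equation*}
where $K_{N,M}$ is the supremum of $\mathbb{E}[|u_0^{(V,M)}(s,y)-v(s,y)|^p\mathds{1}_{[\![0,\theta_{N,M}]\!]}(s)]$ over $[0,T]\times\overline{V}$, finite by the previous display and \eqref{Bploc-bdd}. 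Since $\sup_{t\in[0,T]}C_T^{\,n}A_n^{(p)}(T)\to0$ by \eqref{A_1_SUP}--\eqref{A_22_SUP}, we deduce that $u_n^{(V,M)}\mathds{1}_{[\![0,\theta_{N,M}]\!]}\to v\mathds{1}_{[\![0,\theta_{N,M}]\!]}$ uniformly in $L^p(\Omega)$ on $[0,T]\times\overline{D}$; combined with $u_n^{(V,M)}\to u^{(V,M)}$ in $B^p_{\text{loc}}$ and the identity $u^{(V)}=u^{(V,M)}$ a.s.\ on $[\![0,\tau_M(V)]\!]$ established in Step 1 of the proof of Theorem \ref{wave_D}, this gives $u^{(V)}(t,x)=v(t,x)$ a.s.\ on $\{t\le\theta_{N,M}\}$ for every $(t,x)\in[0,T]\times\overline{D}$.

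Letting $M\to+\infty$ and then $N\to+\infty$, using that $\tau_M(V)\uparrow+\infty$ and $\tilde{\tau}_N\uparrow+\infty$ a.s., we obtain $\theta_{N,M}\uparrow+\infty$ a.s.\ and hence $u(t,x)=v(t,x)$ a.s.\ for all $(t,x)\in[0,T]\times\overline{D}$; the arbitrariness of $R>0$ yields the identity on $[0,T]\times\mathbb{R}^d$. The main technical point is the dual role played by $\theta_{N,M}$: the factor $\tilde{\tau}_N$ is the only device that converts the abstract integrability hypothesis on $v$ into a quantitative $L^p$-bound usable in the iteration, whereas the factor $\tau_M(V)$ is precisely what allows the replacement of $\Lambda$ by $\overline{\Lambda}_M$ through \eqref{local_D} and thereby makes Lemma \ref{lem33chong} applicable to the compensated part of the noise. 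Once this combined localization is correctly set up, the rest of the argument is a verbatim adaptation of Step 4 of Theorem \ref{wave_D}.
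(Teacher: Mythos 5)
Your proposal is correct and follows essentially the same route as the paper: localize to $V_{T,R}$, work under the combined stopping time $\tilde{\tau}_N\wedge\tau_\cdot(V_{T,R})$, split via the Picard iterates $u_n^{(V,\cdot)}$, and rerun the Step~4 iteration of Theorem~\ref{wave_D} to kill the second term. The only cosmetic difference is your use of two separate truncation indices $N$ and $M$ where the paper couples them into a single $N$; this changes nothing in the argument.
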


\begin{proof}
Let \( V_{T,R} := \{ y \in \mathbb{R}^d \; ; \; |y| < T + R \} \) for a fixed \( R > 0 \). Note that \( u(t,x) = u^{(V_{T,R})} (t,x) \) for any \( t \in [0,T] \) and \( |x| \le R \). Hence, by the triangle inequality,
\[
\begin{split}
& \sup_{t \in [0,T]} \sup_{|x| \le R} \mathbb{E} \left[ \left| (u(t,x) - v(t,x)) \mathds{1}_{ {[\![ 0, \tilde{\tau}_N \wedge \tau_N (V_{T,R}) ]\!]} }(t) \right|^p \right]  \\
& \le 2^{p-1} \Bigg( \sup_{t \in [0,T]} \sup_{|x| \le R} \mathbb{E} \left[ \left| (u^{(V_{T,R},N)}(t,x) - u_n^{(V_{T,R},N)}(t,x)) \mathds{1}_{ {[\![ 0, \tilde{\tau}_N \wedge \tau_N (V_{T,R}) ]\!]} }(t) \right|^p \right] \\
& \quad + \sup_{t \in [0,T]} \sup_{|x| \le R} \mathbb{E} \left[ \left| (u_n^{(V_{T,R},N)}(t,x) - v(t,x)) \mathds{1}_{ {[\![ 0, \tilde{\tau}_N \wedge \tau_N (V_{T,R}) ]\!]} }(t) \right|^p \right] \Bigg). \\
\end{split}
\]
The first term on the right-hand side of the inequality above converges to 0. For the second term, using the same arguments as in Step 4 of the proof of Theorem \ref{wave_D}, it can be shown that this term also converges to 0. Consequently, \( u(t,x) = v(t,x) \) a.s. for all  \((t,x) \in [0,T] \times \mathbb{R}^d\).
\end{proof}

\begin{remark}
Theorem \ref{time-comp} implies that, under Assumption \ref{ICH}, Assumption \ref{wave-levy-assumption}, and with the same initial conditions and function \( \sigma \), the solution to \eqref{wave-levy-T} given by Theorem \ref{wave_D} is almost surely identical to the solution to \eqref{wave-levy} given by Theorem \ref{wave-jimenez1} on \( [0,T] \times \mathbb{R}^d \).
\end{remark}

\appendix

\section{Stochastic integration}
For the reader's convenience, this section presents some basic elements of the theory of stochastic integration of \(L^p\)-random measures (see \cite{bit1,bit2}), which are required to perform stochastic calculus with respect to heavy-tailed noise. There are different ways to define random measures. In this work, we follow the definition of \(L^p\)-random measures presented in \cite{chong0}.

\begin{definition}[Definition 2.1 in \cite{chong0}]
\label{LPRM}
Let $\{ \Omega_k \}_{k \in \bN}$ be a sequence of sets in $\cP$ satisfying $\Omega_k \uparrow \tilde{\Omega}$. A map 
\[
M: \cP_M = \bigcup_{k \ge 1} \cP \Big|_{\Omega_k} \to L^p ( \Omega )
\]
is called an \underline{$L^p$-random measure} in $\tilde{\Omega}$ if it satisfies:
\begin{itemize}
    \item[i)] $M( \emptyset ) = 0$ a.s.,
    \item[ii)] For every sequence $\{ A_i \}_{i \in \bN}$ of pairwise disjoint sets in $\cP_M$ with $\bigcup_{i=1}^\infty A_i \in \cP_M$, we have
    \[
     M \Big( \bigcup_{i=1}^{\infty} A_i \Big) = \sum_{i=1}^\infty M(A_i ) \qquad \text{in } L^p(\Omega).
    \]
    \item[iii)] For all $A \in \cP_M$ with $A \subset \Omega \times (t, +\infty) \times \bR^d$ for some $t \in \bR$, the random variable $M(A)$ is $\mathcal{F}_t$-measurable.
    \item[iv)] For all $A \in \cP_M$, $t \in \bR$, and $F \in \cF_t$, we have
    \[
      M(A \cap (F \times (t, +\infty) \times \bR^d )) = \mathds{1}_F M(A \cap (\Omega \times (t, +\infty) \times \bR^d)) \quad a.s.
    \]
\end{itemize}
\end{definition}

In \cite{RR}, a \textit{L\'evy basis} is defined as an infinitely divisible independently scattered random measure on \(\mathbb{R}_+ \times \mathbb{R}^d\). In \cite{chong0,chong1}, the definition of \textit{L\'evy basis} has been modified to incorporate the theory of stochastic integration developed in \cite{bit1}. In this work, we use the same definition of \textit{L\'evy basis} as in \cite{chong0,chong1}. We recall this definition below.

\begin{definition}
\label{levy-basis-def}
An \(L^0\)-random measure \(\tilde{\Lambda}: \tilde{\mathcal{P}_b} \to L^0\) is called a \underline{L\'evy basis} if it satisfies:
\begin{itemize}
    \item[i)]  Let \(\{ B_i \}_{i \in \mathbb{N}}\) be a sequence of pairwise disjoint sets in \(\mathcal{B}_b (\mathbb{R}_+ \times \mathbb{R}^d)\), then \(\{ \tilde{\Lambda} (\Omega \times B_i) \}_{i \in \mathbb{N}}\) is a sequence of independent random variables. Additionally, if \(B \in \mathcal{B}_b (\mathbb{R}_+ \times \mathbb{R}^d)\) satisfies \(B \subset (t, +\infty) \times \mathbb{R}^d\) for some \(t \in \mathbb{R}_+\), then \(\tilde{\Lambda} (\Omega \times B)\) is independent of \(\mathcal{F}_t\).
    \item[ii)] For all \(B \in \mathcal{B}_b (\mathbb{R}_+ \times \mathbb{R}^d)\), \(\tilde{\Lambda} (\Omega \times B)\) has an infinitely divisible distribution.
    \item[iii)] For all \(t \in \mathbb{R}_+\) and \(k \in \mathbb{N}\), we have \(\tilde{\Lambda}(\Omega \times \{t \} \times [-k,k]^d )=0\) a.s.
\end{itemize}
\end{definition}

Now, we extend the notion of pure-jump L\'evy white noise \(\Lambda = \{ \Lambda(B); B \in \mathcal{B}_b(\mathbb{R}_+ \times \mathbb{R}^d) \}\) defined for non-random Borel sets in \eqref{levy-noise1} to a L\'evy basis \(\tilde{\Lambda}\) in order to perform stochastic integration with respect to \(\tilde{\Lambda}\). The details of this extension can be found in \cite{chong0, HM, JS}.

\begin{remark}
\label{ext_levy}
The noise $\Lambda$  in \eqref{levy-noise1} has an extension $\tilde{\Lambda} : \tilde{\cP}_b \to L^0$, that satisfies
\begin{equation}
\label{extension_levy}
\tilde{\Lambda } (\Omega \times B) = \Lambda (B) \qquad \text{for all $B \in \cB_b ( \bR_+ \times \bR^d )$}.
\end{equation}
This random measure $\tilde{\Lambda}$ satisfies Definition \ref{levy-basis-def}.
\end{remark}

Let \(\mathcal{S}\) be the set of simple predictable processes of the form \(S = \sum_{i=1}^k \alpha_i \mathds{1}_{A_i}\) for some \(\alpha_i \in \mathbb{R}\) and \(A_i \in \tilde{\mathcal{P}}_M\). Then, the stochastic integral of \(S\) with respect to \(M\) is given by:
\[
I^{\Lambda}(S) = \int_0^\infty \int_{\mathbb{R}^d} S(t,x) M(dt,dx)= \sum_{i=1}^k \alpha_i M(A_i).
\]
For any \(p \ge 0\), the \textit{Daniell mean} of a process \(H = \{ H(t,x); t \ge 0, \, x \in \mathbb{R}^d \}\) with respect to an \(L^p\)-random measure \(M\) is defined by
\[
|| H ||_{M,p} = \sup_{S \in \mathcal{S}_\Lambda ; |S| \le |H|} || I^M (S) ||_{p} .
\]

A predictable process $H$ is said to be {\em $p$-integrable} with respect to $M$ if there exists a sequence $\{ S_n\}_{ n \ge 1} \subset \mathcal{S}$ such that $||S_n -H ||_{ M,p} \to 0$ as $n \to +\infty $. In this case, the {\em stochastic integral} of $H$ with respect to $M$ is given by:
\begin{equation}
    \label{def-int-levy}
    I^{M} (S) = \lim_{n \to +\infty } I^{M}(S_n) \quad \text{in} \quad L^p ( \Omega).
\end{equation}

We define the space
\[
L^{1,p}(M) = \Big\{ \text{the set of } p\text{-integrable processes with respect to } M \Big\}.
\]
In this definition, we omit writing \(p\) if \(p=0\). Note that the map \(I^M : L^{1,p}(M) \rightarrow L^p(\Omega)\) is a contraction. We include the following version of the dominated convergence theorem for stochastic integrals with respect to \(M\).

\begin{theorem}[Theorem A.1 of \cite{CDH}]
\label{DCT1}
If \(\{ H_n \}_{n \in \mathbb{N}}\) is a sequence of predictable processes that converges pointwise to \(H\), and \(|H_n| \le H_0\) for all \(n \ge 1\) and some \(H_0 \in L^{1,p}(M)\), then \(H_n, H \in L^{1,p}(M)\) and \(\|H - H_n\|_{M,p} \to 0\) as \(n \to +\infty\).
\end{theorem}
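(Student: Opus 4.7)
The plan is to verify that the Daniell mean $\|\cdot\|_{M,p}$ behaves as a measure-theoretic gauge and then run the classical dominated convergence argument against it. The first step is to establish four properties of $\|\cdot\|_{M,p}$ that follow directly from its definition as a supremum over $\cS$-integrals: (i) monotonicity, $|H'|\le |H|$ pointwise implies $\|H'\|_{M,p}\le\|H\|_{M,p}$, which is immediate because every simple $S$ dominated by $|H'|$ is dominated by $|H|$; (ii) positive homogeneity (with exponent $1$ if $p\ge 1$, exponent $p$ if $p<1$); (iii) quasi-subadditivity with the same constant that governs $\|\cdot\|_p$ on $L^p(\Omega)$; and (iv) a Fatou-type continuity from below, namely $0\le f_n\uparrow f$ pointwise implies $\|f_n\|_{M,p}\uparrow\|f\|_{M,p}$. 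Property (iv) is the substantive one and is the analogue of Lemma~3.2.1 of \cite{bit1}: for each simple $S$ with $|S|\le f$, the truncations $S\wedge f_n$ are simple processes dominated by $f_n$, and $I^M(S\wedge f_n)\to I^M(S)$ in $L^p(\Omega)$ by ordinary dominated convergence on $\Omega$; taking suprema over $S$ and $n$ and exchanging their order yields (iv).

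Second, with (i)--(iv) in hand, I would settle the integrability half of the statement. Pointwise convergence and the domination hypothesis give $|H|\le H_0$, so monotonicity yields $\|H_n\|_{M,p},\|H\|_{M,p}\le\|H_0\|_{M,p}<\infty$. To upgrade this finiteness to actual membership in $L^{1,p}(M)$, I would invoke the standard completeness fact from \cite{bit1}, recorded in Section~2 of \cite{chong0}: the closure of $\cS$ in $\|\cdot\|_{M,p}$ equals the set of predictable processes $\phi$ for which $\|\phi\|_{M,p}<\infty$ and $\phi$ can be approximated pointwise (hence, by (iv) combined with a truncation, also in $\|\cdot\|_{M,p}$) by bounded predictable processes. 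Since every bounded predictable process is trivially in $L^{1,p}(M)$ (approximate by $\cS$ in the usual simple-function manner and apply (iv)), and since $H_n,H$ are dominated by $H_0$, we conclude $H_n,H\in L^{1,p}(M)$.

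Third, for the convergence statement, I would set $G_n:=\sup_{k\ge n}|H-H_k|$, which is predictable, satisfies $G_n\downarrow 0$ pointwise, and is bounded by $2H_0$. Apply property (iv) to the increasing sequence $2H_0-G_n\uparrow 2H_0$: since $\|2H_0\|_{M,p}<\infty$, the corresponding increasing limit of means is finite, and quasi-subadditivity then forces $\|G_n\|_{M,p}\to 0$; this is the standard Beppo-Levi-implies-DCT conversion, but carried out against $\|\cdot\|_{M,p}$ instead of a classical integral. Monotonicity (i) applied to $|H-H_n|\le G_n$ yields $\|H-H_n\|_{M,p}\to 0$, which completes the proof.

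The main obstacle is item (iv), the Fatou-type continuity, because the Daniell mean is itself a supremum and one has to exchange it with a monotone limit; once that interchange is justified via the truncation argument above, the remainder of the proof is essentially a transcription of the classical Lebesgue dominated convergence proof into the gauge setting. A minor bookkeeping point, which I would handle by splitting cases on $p$, is that for $p<1$ the relevant quasi-norm on $L^p(\Omega)$ is $\bE[|\cdot|^p]$ rather than its $p$-th root, so the subadditivity constant in (iii) and the homogeneity exponent in (ii) must be tracked accordingly; none of the four steps above is affected beyond the obvious change of exponents.
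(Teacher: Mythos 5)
The paper does not prove this statement at all: it is quoted verbatim as Theorem A.1 of \cite{CDH}, whose proof in turn rests on the Bichteler--Jacod theory of the Daniell mean. So your proposal is an attempt at a proof the author deliberately outsourced, and unfortunately it has two genuine gaps. First, in your key property (iv), the processes $S\wedge f_n$ are \emph{not} simple when $f_n$ is a general predictable process, so they are not admissible competitors in the supremum defining $\|f_n\|_{M,p}$, and $I^M(S\wedge f_n)$ is not even defined at that stage of the construction (it would have to be defined as a limit in the very mean whose continuity you are trying to establish); the appeal to ``ordinary dominated convergence on $\Omega$'' is therefore circular. One can repair this by first showing that bounded predictable processes supported on sets of $\tilde{\mathcal{P}}_b$ are integrable and that the mean of an integrable process dominates the $L^p$-norm of its integral, but that is exactly the monotone-class machinery of \cite{bit1}, not a one-line truncation.

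The second gap is the fatal one. Your ``Beppo-Levi implies DCT'' conversion in the third step does not work for a sublinear gauge. Quasi-subadditivity applied to $2H_0=(2H_0-G_n)+G_n$ gives $\|2H_0\|_{M,p}\le C\bigl(\|2H_0-G_n\|_{M,p}+\|G_n\|_{M,p}\bigr)$, i.e.\ a \emph{lower} bound on $\|G_n\|_{M,p}$ which tends to something $\le 0$ as $\|2H_0-G_n\|_{M,p}\uparrow\|2H_0\|_{M,p}$ --- this is vacuous. To conclude $\|G_n\|_{M,p}\to 0$ you would need $\|G_n\|_{M,p}\le \|2H_0\|_{M,p}-\|2H_0-G_n\|_{M,p}$, which requires additivity (or superadditivity) of the mean; the classical Lebesgue argument uses precisely the additivity of the integral, and the Daniell mean has no such property. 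The actual proof in \cite{bit1,bit2,CDH} proceeds differently: one first shows that predictable processes are \emph{measurable} for the Daniell mean and then runs an Egoroff-type argument (uniform convergence off a set of small mean, combined with the domination by $H_0$ to control the exceptional set). Without that measurability/Egoroff input, monotone continuity from below plus subadditivity simply does not imply dominated convergence, so the argument as written does not close.
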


The following local property of the stochastic integral plays an important role in this article. While different versions of this property can be found in \cite{bit1,bit2,JS}, we include its proof here because we could not find a direct reference that addresses it with respect to a L\'evy basis as in Definition \ref{levy-basis-def}.

\begin{lemma}
     \label{local-int}
     Let $\tilde{\Lambda}$ be a L\'evy basis and $H(t,x)$ be a predictable process such that there exists an increasing sequence $\{T_n\}_{n \in \mathbb{N}}$ of stopping times with $T_N \uparrow +\infty $ a.s. for $N \to + \infty$, and $H(t,x) \mathds{1}_{[\![ 0, T_N]\!]} (t)$ is $p$-integrable with respect to $\tilde{\Lambda}$ for all $N \in \mathbb{N}$. Then, for any stopping time $\tau$ we have:
     \begin{equation}
         \label{local-prop-time}
               \mathds{1}_{[\![ 0, \tau]\!]} (t) \int_0^t \int_{\mathbb{R}^d} H(s,y) \tilde{\Lambda}(ds,dy) =  \mathds{1}_{[\![ 0, \tau]\!]} (t) \int_0^t \int_{\mathbb{R}^d} H(s,y) \mathds{1}_{[\![ 0, \tau]\!]} (s) \tilde{\Lambda}(ds,dy).
     \end{equation}
 \end{lemma}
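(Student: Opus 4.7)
The plan is to verify the identity first for a dyadic approximation $\tau_n := \lceil 2^n \tau \rceil / 2^n$ of $\tau$, where both sides can be compared termwise via property (iv) of Definition 2.1 in \cite{chong0} (the factoring property of $L^p$-random measures), and then to pass to the limit $n \to \infty$ with the help of Theorem \ref{DCT1}.

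First I would localize: applying the identity to $H \mathds{1}_{[\![ 0, T_N ]\!]}$ and $\tau \wedge T_N$ reduces the statement to the case where $H \in L^{1,p}(\tilde{\Lambda})$ and $\tau$ is bounded, the general statement being recovered by letting $N \to \infty$ using $T_N \uparrow +\infty$ a.s. In the bounded case the crucial ingredient is the pointwise identity
\[
\mathds{1}_{[\![ 0, \tau_n ]\!]}(s) = \sum_{k \ge 1} \mathds{1}_{\{\tau > (k-1) 2^{-n}\}}\, \mathds{1}_{((k-1) 2^{-n},\, k 2^{-n}]}(s),
\]
a finite sum (as $\tau_n$ is bounded) of \emph{predictable} indicators in which each factor $\mathds{1}_{\{\tau > (k-1) 2^{-n}\}}$ is $\mathcal{F}_{(k-1) 2^{-n}}$-measurable. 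Property (iv) of Definition 2.1, extended from indicator sets to arbitrary predictable $L^{1,p}$-integrands by the standard approximation with simple processes, then allows each such factor to be pulled outside its integral. Writing $\int_0^t H\,d\tilde{\Lambda}$ as the analogous sum without these factors, the difference of the two sides of the lemma with $\tau_n$ in place of $\tau$ reduces to
\[
\sum_{k \ge 1} \mathds{1}_{\{\tau \le (k-1) 2^{-n}\}} \int_{(k-1) 2^{-n}}^{k 2^{-n} \wedge t} H\, d\tilde{\Lambda},
\]
and on $\{t \le \tau_n\}$ every summand vanishes because the range of summation forces $(k-1) 2^{-n} < t$, which combined with $\tau_n \ge t$ implies $\tau > (k-1) 2^{-n}$.

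The passage $n \to \infty$ is then routine: $\tau_n \downarrow \tau$ yields $\mathds{1}_{\{t \le \tau_n\}} \to \mathds{1}_{\{t \le \tau\}}$ a.s., while $H \mathds{1}_{[\![ 0, \tau_n ]\!]} \to H \mathds{1}_{[\![ 0, \tau ]\!]}$ pointwise with dominating majorant $|H|$, so Theorem \ref{DCT1} delivers $L^p$-convergence of the stochastic integrals and produces \eqref{local-prop-time}. The main obstacle is obtaining a genuinely \emph{predictable} decomposition of $\mathds{1}_{[\![ 0, \tau_n ]\!]}$: a decomposition according to the atoms of $\tau_n$ would yield factors $\mathds{1}_{\{\tau_n = k 2^{-n}\}}$ which are $\mathcal{F}_{k 2^{-n}}$- but not $\mathcal{F}_{k 2^{-n}-}$-measurable, so the individual summands fail to be predictable even though their sum is. The telescoping rewriting above, which uses the equivalence $\{\tau_n \ge k 2^{-n}\} = \{\tau > (k-1) 2^{-n}\} \in \mathcal{F}_{(k-1) 2^{-n}}$, bypasses this difficulty and is precisely what makes property (iv) applicable to each summand.
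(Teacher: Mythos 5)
Your proposal is correct and follows essentially the same route as the paper: both rest on decomposing the stochastic interval of a discretized stopping time into predictable rectangles $\{\tau > t_{k-1}\}\times(t_{k-1},t_k]$ (so that each indicator $\mathds{1}_{\{\tau> t_{k-1}\}}$ is $\mathcal{F}_{t_{k-1}}$-measurable and property (iv) of the random measure applies), followed by a passage to the limit via the dominated convergence theorem and the extension from simple to general integrands. The only notable difference is that you discretize $\tau$ from above by genuine dyadic stopping times $\tau_n\downarrow\tau$, whereas the paper first treats an elementary $\tau$ and then approximates a general $\tau$ from below; your variant is, if anything, the more standard and careful choice.
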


\begin{proof}
\textit{Step 1:} first, we prove \eqref{local-prop-time} for an elementary stopping time $\tau$ and $H(s,y) = \mathds{1}_{A}(s,y)$, with $A \in \tilde{\cP}_b$, i.e.,
\begin{equation}
    \label{local-prop0}
    \mathds{1}_{[\![ 0, \tau]\!]} (t) \int_0^t \int_{\mathbb{R}^d} \mathds{1}_{A}(s,y) \tilde{\Lambda}(ds,dy)= \mathds{1}_{[\![ 0, \tau]\!]} (t) \int_0^t \int_{\mathbb{R}^d} \mathds{1}_{A}(s,y) \mathds{1}_{[\![ 0, \tau]\!]} (s) \tilde{\Lambda}(ds,dy).
\end{equation}
Let $\tau$ be an elementary stopping time, and assume that $\tau$ takes values
\[
\{ 0 = t_0 <  t_1 < t_2 < \ldots < t_{N_0 +1} \}.
\]
Then, the stochastic interval ${[\![  0, \tau]\!]} $ can be decomposed as
\begin{equation}
    \label{decomp_stochastic_int2}
    {[\![  0, \tau]\!]} =  (\{ \tau=0 \} \times \{ t = 0 \} ) \cup \bigcup_{n=0}^{N_0}  \{ \tau \ge t_{n+1} \} \times (t_n, t_{n+1} ].
\end{equation}
Hence,
\begin{equation}
    \label{decomp_stochastic_int1}
    \begin{split}
    \mathds{1}_{ {[\![  0, \tau]\!]} } &= \mathds{1}_{ \{ t=0 \} \times \{ \tau=0\} } + \sum_{n =0}^{N_0} \mathds{1}_{ \{ \tau \ge t_{n+1} \} \times (t_n, t_{n+1} ]}, \\
    \end{split}
\end{equation}
and observe that we can re-arrange the sum on \eqref{decomp_stochastic_int1} in the following way:
\begin{equation}
\label{decomp_stochastic_int1001}
\begin{split}
\sum_{n =0}^{N_0} \mathds{1}_{ \{ \tau \ge t_{n+1} \} \times (t_n, t_{n+1} ]} & =  \sum_{n =0}^{N_0} \sum_{ i= n +1 }^{N_0}  \mathds{1}_{ \{ \tau = t_{i} \} } \mathds{1}_{  (t_n, t_{n+1} ] } \\
& =  \sum_{n=1}^{N_0 + 1} \mathds{1}_{ \{ \tau = t_{n} \} \times (0, t_{n} ]}. \\ 
\end{split}   
\end{equation}
Then, we can re-write \eqref{decomp_stochastic_int1} as
\begin{equation}
    \label{decomp_stochastic_int12}
     \mathds{1}_{ {[\![  0, \tau]\!]} } =  \mathds{1}_{ \{ t=0 \} \times \{ \tau=0\} } +\sum_{n=1}^{N_0 + 1} \mathds{1}_{ \{ \tau = t_{n} \} \times (0, t_{n} ]}.
\end{equation}
Hence, by conditions (ii) and  (iv) of Definition \ref{LPRM} and Definition \ref{levy-basis-def}-(iii), we have:
\begin{equation}
\label{local001}
\begin{split}
     & \int_0^t \int_{\mathbb{R}^d} \mathds{1}_{A}(s,y)  \mathds{1}_{[\![  0, \tau]\!]} (s) \tilde{\Lambda}(ds,dy)  = \tilde{\Lambda}(A \cap ([\![  0, \tau]\!] \times \mathbb{R}^d  ) \cap (\Omega \times [0,t] \times \mathbb{R}^d)) \\
     &= \sum_{n = 0 }^{N_0} \tilde{\Lambda}( A \cap ( \{ \tau \ge t_{n+1} \} \times (t_n, t_{n+1} ] \times \mathbb{R}^d ) \cap (\Omega \times [0,t] \times \mathbb{R}^d)) \\
     & =  \sum_{n = 0 }^{N_0}  \mathds{1}_{  \{ \tau \ge t_{n+1} \} }  \tilde{\Lambda}( A \cap ( \Omega \times (t_n, t_{n+1} ] \times \mathbb{R}^d ) \cap (\Omega \times [0,t] \times \mathbb{R}^d) )\\
    & =  \sum_{n = 0 }^{N_0}  \mathds{1}_{  \{ \tau \ge t_{n+1} \} }  \int_{0}^{t} \int_{\mathbb{R}^d } \mathds{1}_{A}(s,y) \mathds{1}_{(t_n,t_{n+1}]} (s) \tilde{\Lambda}(ds,dy) \\
    & =  \sum_{n = 1 }^{N_0 +1}  \mathds{1}_{ \{ \tau= t_n \} }   \int_{0}^{t} \int_{\mathbb{R}^d } \mathds{1}_{A}(s,y) \mathds{1}_{(0,t_n]} (s)\tilde{\Lambda}(ds,dy). \\  
\end{split}
\end{equation}
Using the same argument as in \eqref{decomp_stochastic_int1001} and the linearity of $\tilde{\Lambda}$ on $\tilde{ \cP}_b$, we can re-arrange the sum in the last equality of \eqref{local001} as follows:
\begin{equation}
\label{decomp_stochastic_int102}
\begin{split}
    & \sum_{n = 0 }^{N_0}  \mathds{1}_{  \{ \tau \ge t_{n+1} \} } \int_{0}^{t} \int_{\mathbb{R}^d } \mathds{1}_{A}(s,y) \mathds{1}_{(t_n,t_{n+1}]} (s) \tilde{\Lambda}(ds,dy) \\ 
    & =  \sum_{n = 1 }^{N_0 +1}  \mathds{1}_{ \{ \tau= t_n \} } \int_{0}^{t} \int_{\mathbb{R}^d } \mathds{1}_{A}(s,y) \mathds{1}_{(0,t_n]} (s) \tilde{\Lambda}(ds,dy). \\
\end{split}
\end{equation}
Additionally, for a fixed \( i \in \{1,2, \ldots, N_0\} \), it holds:
\begin{equation}
    \label{localprop00}
    \mathds{1}_{ \{ \tau= t_i \} }  \mathds{1}_{[\![  0, \tau]\!]} (t) =    \mathds{1}_{ \{ \tau= t_i \} }  \mathds{1}_{(0,t_i]} (t).
\end{equation}
Hence, by \eqref{decomp_stochastic_int12}, \eqref{local001}, \eqref{decomp_stochastic_int102}, and \eqref{localprop00}, we have:
\begin{equation}
    \label{local0}
    \begin{split}
    & \mathds{1}_{[\![  0, \tau]\!]} (t)  \int_0^t \int_{\mathbb{R}^d} \mathds{1}_{A}(s,y) \mathds{1}_{[\![  0, \tau]\!]} (s) \tilde{\Lambda}(ds,dy) \\
    & =  \sum_{n = 1 }^{N_0 +1}  \mathds{1}_{ \{ \tau= t_n \} }  \mathds{1}_{[\![  0, \tau]\!]} (t) \int_{0}^{t} \int_{\mathbb{R}^d } \mathds{1}_{A}(s,y) \mathds{1}_{(0,t_n]}  (s) \tilde{\Lambda}(ds,dy) \\
    &=  \sum_{n = 1 }^{N_0 +1}  \mathds{1}_{ \{ \tau= t_n \} }  \mathds{1}_{(0,t_n]} (t) \int_{0}^{t} \int_{\mathbb{R}^d } \mathds{1}_{A}(s,y) \mathds{1}_{(0,t_n]} (s) \tilde{\Lambda}(ds,dy) \\ 
    &=  \sum_{n = 1 }^{N_0 +1 }  \mathds{1}_{ \{ \tau= t_n \} }  \mathds{1}_{(0,t_n]} (t) \int_{0}^{t} \int_{\mathbb{R}^d } \mathds{1}_{A}(s,y)  \tilde{\Lambda}(ds,dy) \\ 
    &=  \mathds{1}_{ {[\![  0, \tau]\!]} } (t)  \int_{\mathbb{R}^d } \mathds{1}_{A}(s,y)  \tilde{\Lambda}(ds,dy). \\ 
    \end{split}
\end{equation}

\textit{Step 2:} Now we prove \eqref{local-prop0} when \(\tau\) is an arbitrary stopping time. Note that there exists a sequence of elementary stopping times \(\{ \tau_n \}_{n \in \mathbb{N}}\) such that \(\tau_n \uparrow \tau\) a.s. as \(n \to +\infty\). Hence, by Theorem \ref{DCT1}, we have that \eqref{local-prop0} holds for \(\tau\).

\textit{Step 3:} We prove the case when \( H(t,x) \) is predictable and \( H(t,x) \mathds{1}_{[\![ 0, T_N]\!]}(t) \) is \( p \)-integrable with respect to \( \tilde{\Lambda} \) for all \( n \in \mathbb{N} \). Since \( H(s,y) \mathds{1}_{ {[\![ 0, T_N]\!]} }(s) \in L^{1,p}(\tilde{\Lambda}) \) for all \( N \in \mathbb{N} \), there exists a sequence of simple integrands \( \{ S_n \}_{n \in \mathbb{N}} \) such that \( \| v^{(t)} - S_n^{(t)} \|_{\tilde{\Lambda},p} \to 0 \) as \( n \to +\infty \), where \( v^{(t)}(s,y) = H(s,y) \mathds{1}_{[0,t]}(s) \) and \( S_n^{(t)}(s,y) = S_n(s,y) \mathds{1}_{[0,t]}(s) \) for each fixed \( t \in \mathbb{R}_+ \). Hence, by the linearity of \( I^{\tilde{\Lambda}} \) and the steps above, we have:
\begin{equation}
\label{ftime}
\mathds{1}_{[\![ 0, \tau]\!]}(t) \int_0^t \int_{\mathbb{R}^d} S_n(s,y) \tilde{\Lambda}(ds,dy) = \mathds{1}_{[\![ 0, \tau]\!]}(t) \int_0^t \int_{\mathbb{R}^d} S_n(s,y) \mathds{1}_{[\![ 0, \tau]\!]}(s) \tilde{\Lambda}(ds,dy),
\end{equation}
for all \( n \in \mathbb{N} \). Letting \( n \to +\infty \) in \eqref{ftime}, we conclude that \eqref{local-prop-time} holds.

\end{proof}

\section*{Acknowledgments}
The author is deeply grateful to Raluca M. Balan for providing valuable comments and insightful discussions. The author is also thankful for the constructive feedback provided by the anonymous referee, all of which led to improvements in Section 3.


\begin{thebibliography}{9}

\bibitem{balan27}  R. M. Balan. SPDEs with $\alpha$-stable L\'evy noise: a random field approach. \textit{Int. J. Stoch. Anal.}, Article ID 793275, 22 pages, 2014.

\bibitem{balan28} R. M. Balan. Integration with respect to L\'evy colored noise, with applications to SPDEs. \textit{Stochastics}, 87(3):363–381, 2015.

\bibitem{balan33} R. M. Balan and C.B. Ndongo. Intermittency for the wave equation with L\'evy white noise. \textit{Statist. Probab. Lett.}, 109:214–223, 2016.

\bibitem{balan38} R. M. Balan and C.B. Ndongo. Malliavin differentiability of solutions of SPDEs with L\'evy white noise. \textit{Int. J. Stoch. Anal.}, Article ID 9693153, 9 pages, 2017.

\bibitem{balan49} R. M. Balan. Stochastic wave equation with L\'evy white noise. \textit{ALEA, Lat. Am. J. Probab. Math. Stat.}, 20:463–496, 2023.

\bibitem{berger0} Q. Berger and H. Lacoin. The scaling limit of the directed polymer with power-law tail disorder. \textit{Comm. Math. Phys.}, 386(2):1051–1105, 2021.

\bibitem{berger} Q. Berger, C. Chong and H. Lacoin. The stochastic heat equation with multiplicative L\'evy noise: Existence, moments, and intermittency. \textit{Commun. Math. Phys.}, 402(3):2215-2299, 2023.

\bibitem{bit1} K. Bichteler and J. Jacod. Random measures and stochastic integration. In G. Kallianpur (Ed.), \textit{Theory and Application of Random Fields}, pages 1–18. Springer, Berlin, 1983.

\bibitem{bit2} K. Bichteler. \textit{Stochastic Integration with Jumps}. Cambridge University Press, Cambridge, 2002.

\bibitem{chong0} C. Chong and C. Klüppelberg. Integrability conditions for space–time stochastic integrals: Theory and applications. \textit{Bernoulli}, 21(4):2190-2216, 2015.

\bibitem{chong1} C. Chong. Stochastic PDEs with heavy-tailed noise. \textit{Stochastic Process. Appl.}, 127(7):2262-2280, 2017.

\bibitem{chong2} C. Chong. L\'evy-driven Volterra equations in space and time. \textit{J. Theoret. Probab.}, 30(3):1014–1058, 2017.

\bibitem{CDH} C. Chong, R. C. Dalang and T. Humeau. Path properties of the solution to the stochastic heat equation with L\'evy noise. \textit{Stoch. Partial Differ. Equ. Anal. Comput.}, 7(1):123–168, 2019.

\bibitem{Dalang99} R. C. Dalang. Extending the martingale measure stochastic integral with applications to spatially homogeneous SPDE's. \textit{Electron. J. Probab.}, 4(6):1-29, 1999.

\bibitem{DalangWave} R. C. Dalang. The stochastic wave equation. In R. C. Dalang, D. Khoshnevisan, D. Nualart, and Y. Xiao (Eds.), \textit{A Minicourse on Stochastic Partial Differential Equations}, pages 39–71. Springer, Berlin, 2009.

\bibitem{Dalang-Quer} R. C. Dalang and L. Quer-Sardanyons. Stochastic integrals for SPDE’s: A comparison. \textit{Expo. Math.}, 29(1):67-109, 2011.

\bibitem{HM} T. Humeau. \textit{Stochastic partial differential equations driven by L\'evy white noises}. Thesis (Ph.D.), No. 8223, EPFL, Lausanne, 2017.

\bibitem{JS} J. Jacod and A. N. Shiryaev. \textit{Limit Theorems for Stochastic Processes}, second ed., Springer, Berlin, 2003.

\bibitem{Millet-Sole99} A. Millet and M. Sanz-Solé. A stochastic wave equation in two space dimensions: smoothness of the law. \textit{Ann. Probab.}, 27(2):803-844, 1999.

\bibitem{RR} B. S. Rajput and J. Rosinski. Spectral representations of infinitely divisible processes. \textit{Probab. Theory Relat. Fields}, 82(3):451–487, 1989.

\bibitem{SLB} E. Saint Loubert Bié. Étude d'une EDPS conduite par un bruit Poissonnien. \textit{Probab. Theory Related Fields}, 111(2):287-321, 1998.

\bibitem{ves} F. Tr\'eves. \textit{Basic Linear Partial Differential Equations}. Vol. 62. Academic Press, 1975.

\bibitem{walsh86} J. B. Walsh. An introduction to stochastic partial differential equations. In P. L. Hennequin (Ed.), \textit{École d'Été de Probabilités de Saint-Flour XIV-1984}, pages 265–439. Springer, Berlin, 1986.

\end{thebibliography}
\end{document}